\theoremstyle{plain}
\newtheorem{theorem}{Theorem}[section]
\newtheorem{corollary}[theorem]{Corollary}
\newtheorem{proposition}[theorem]{Proposition}
\theoremstyle{definition}
\newtheorem{example}[theorem]{Example}
\newtheorem{remark}[theorem]{Remark}
\begin{document}
\sloppy

\title[Hyperreals and Topological Filters]{A Connection between Hyperreals\\ and Topological Filters}

\author[M. Benslimane]{Mohamed Benslimane}
\address{Department of Mathematics, Faculty of Sciences,\\ Abdelmalek Essa\^{a}di University, B.P. 2121 Tetouan, Morocco} \email{med.bens@gmail.com}

\subjclass[2020]{00A05, 54A20, 54J05}

\keywords{(Absolute) Ultrafilter, Hyperreal, Topological filter.}

\begin{abstract}
Let $U$ be an absolute ultrafilter on the set of non-negative integers $\mathbb{N}$.
For any sequence $x=(x_n)_{n\geq 0}$ of real numbers, let $U(x)$ denote the topological filter consisting of the open sets $W$ of $\mathbb{R}$ with
$\{n \geq 0, x_n \in W\} \in U$. It turns out that for every $x \in \mathbb{R}^{\mathbb{N}}$, the hyperreal $\overline{x}$ associated to $x$ (modulo $U$)
is completely characterized by $U(x)$. This is particularly surprising.
We introduce the space $\widetilde{\mathbb{R}}$ of saturated topological filters of $\mathbb{R}$ and then we prove that the set $^\ast\mathbb{R}$ of hyperreals modulo $U$  can be embedded in $\widetilde{\mathbb{R}}$. It is also shown that $\widetilde{\mathbb{R}}$ is quasi-compact
and that $^\ast\mathbb{R} \setminus \mathbb{R}$ endowed with the induced topology by the space $\widetilde{\mathbb{R}}$ is a separated topological space.
\end{abstract}

\date{May 15, 2024}

\maketitle

\section{Introduction}

Following the research papers which have been devoted to the study of \textquotedblleft hyperreals\textquotedblright
or \textquotedblleft topological filters\textquotedblright, it seems that these two notion are independent.
The present paper aims to establish a bridge between these two concepts. An additional point which motivated our study is that we expect that this connection
may lead to construct a mathematical tool that could unify the corpuscular and probabilistic theories of particle physics.

We first recall that a nonempty collection $\theta$ of open subsets
of a topological space $(E; \tau)$ is said to be a {\it $\tau$-filter} of $E$ if $\theta$ satisfies the following three conditions:

(i) $\emptyset \not\in \theta$.

(ii) If $A, B \in \theta$, then  $A \cap B \in \theta.$

(iii) If $A \in \theta$ and $A \subseteq B \in \tau$, then $B \in \theta.$

In this paper we introduce the notion of saturated filters. If a $\tau$-filter $\theta$ satisfies also the condition

\begin{center}
(iv) If $A, B \in \tau$ with $A \cup B \in \theta$, then  $A \in \theta$ or $B \in \theta,$
\end{center}
then $\theta$ is said to be a {\it saturated filter} ({\it $s$-filter}, for short). The set of all $s$-filters of $E$ will be denoted by $\widetilde{E}$.

\begin{example} \label{s-filter} \rm Let $(E; \tau)$ be a topological space and let $x \in E$. It is easily seen that the set $\omega(x)$
of open neighbourhoods of $x$ is an $s$-filter called a {\it principal $s$-filter}.
\end{example}
The set of all principal $s$-filters of $E$ is denoted by $[E]$. The paper is organized as follows.
In Section $1$, several interesting facts about $s$-filters are established.
Among others, we show that the space $\widetilde{\mathbb{R}}$ is quasi-compact (Proposition \ref{quasi-compact-2}).
In Section $2$, we discover an important connection between hyperreals and topological filters (Theorem \ref{converse}).
We refer the interested reader to \cite{Goldblatt} for definitions and basic facts about hyperreals.
We conclude the paper by proving that $^\ast\mathbb{R} \setminus \mathbb{R}$ endowed with the induced topology by the space $\widetilde{\mathbb{R}}$
is a separated topological space (Theorem \ref{separated}). \\
\noindent {\bf Notations} \\
In this paper we will use the following notations.
The letters $\mathbb{N}$ and $\mathbb{R}$ are used for the set of non-negative integers and the field of real numbers, respectively.
For given subsets $A$ and $B$ of a set $X$, $A \setminus B=\{a \in A \mid a \not\in B\}$, and we write $A^c=X \setminus A$.
We use $|X|$ to denote the cardinality of a set $X$. Given an absolute ultrafilter $U$ on $\mathbb{N}$, we let $^\ast\mathbb{R}$
denote the set of hyperreals modulo $U$.

\section{Some properties of saturated filters}

In this section we establish various interesting facts about $s$-filters which will be used in the last section to achieve our objective.
We begin with the following result which shows that every topological space possesses non-principal $s$-filters.

\begin{proposition} \label{maximal-filter} Every maximal $\tau$-filter of a topological space $E$ is an $s$-filter.
\end{proposition}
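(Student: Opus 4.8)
The plan is to argue by contradiction, exploiting maximality through a single extension principle. Let $\theta$ be a maximal $\tau$-filter of $E$; conditions (i)--(iii) hold by hypothesis, and the goal is to deduce (iv). The decisive tool I would isolate first is the following \emph{extension criterion}: if $A \in \tau$ meets every member of $\theta$, meaning $A \cap F \neq \emptyset$ for all $F \in \theta$, then necessarily $A \in \theta$. Once this is in hand, condition (iv) drops out quickly.

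To establish the criterion, suppose $A \in \tau$ satisfies $A \cap F \neq \emptyset$ for all $F \in \theta$, and form the collection $\theta_A = \{ W \in \tau : A \cap F \subseteq W \text{ for some } F \in \theta \}$. I would check that $\theta_A$ is a $\tau$-filter: it is nonempty (it contains $E$, which lies in $\theta$ by (iii)); it avoids $\emptyset$ precisely because $A \cap F \neq \emptyset$ for every $F \in \theta$, so $\emptyset \in \theta_A$ would force some $A \cap F = \emptyset$, a contradiction; it is closed under finite intersections using $A \cap (F_1 \cap F_2) \subseteq W_1 \cap W_2$ together with (ii) applied in $\theta$; and it is upward closed in $\tau$ by transitivity of $\subseteq$. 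Moreover $\theta \subseteq \theta_A$ (for $W \in \theta$ take $F = W$), while $A \in \theta_A$ by taking $F = E$. Maximality of $\theta$ then forces $\theta_A = \theta$, whence $A \in \theta$, as claimed.

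Now I would verify (iv). Assume, for contradiction, that $A, B \in \tau$ satisfy $A \cup B \in \theta$ but $A \notin \theta$ and $B \notin \theta$. Applying the contrapositive of the extension criterion to $A \notin \theta$ yields some $F_1 \in \theta$ with $A \cap F_1 = \emptyset$, and similarly some $F_2 \in \theta$ with $B \cap F_2 = \emptyset$. Setting $F = F_1 \cap F_2$, which lies in $\theta$ by (ii), gives $A \cap F = \emptyset$ and $B \cap F = \emptyset$, hence $(A \cup B) \cap F = \emptyset$. But $A \cup B$ and $F$ both belong to $\theta$, so $(A \cup B) \cap F \in \theta$ by (ii), forcing $\emptyset \in \theta$ and contradicting (i). Therefore (iv) holds and $\theta$ is an $s$-filter.

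The step I expect to be the only genuine obstacle is the passage that in the classical ultrafilter argument is handled by complementation: from $A \notin \theta$ one would like to pass to ``the complement of $A$'' inside $\theta$, but in a topological space the complement of an open set need not be open, so this move is unavailable. The generated filter $\theta_A$ is precisely the device that circumvents this, converting ``$A$ meets every member of $\theta$'' into membership $A \in \theta$ without ever forming a complement; verifying that $\theta_A$ is a bona fide $\tau$-filter strictly containing $\theta$ (and not merely a collection with the nominal closure properties) is the technical heart of the proof.
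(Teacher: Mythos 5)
Your proof is correct and follows essentially the same route as the paper's: your ``extension criterion'' (an open set meeting every member of a maximal $\tau$-filter must belong to it) is exactly the paper's claim $\mu \subseteq \rho$, proved via the identical generated filter $\sigma = \theta_A$, and the concluding disjointness argument with $F_1 \cap F_2$ matches the paper's step with $A \cap B$. The only difference is purely presentational: you phrase the last step as a contradiction, while the paper states it contrapositively.
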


\begin{proof} Let $E$ be a topological space and let $\rho$ be a maximal $\tau$-filter of $E$. Consider the set
$\mu = \{X \in \tau \mid X \cap A \neq \emptyset$ for every $A \in \rho\}$. We claim that $\mu \subseteq \rho$. To see this, let $X \in \mu$.
It is easy to check that the set
\begin{center}
$\sigma = \{B \in \tau \mid X \cap A \subseteq B$ for some $A \in \rho\}$
\end{center}
is a $\tau$-filter of $E$ such that $X \in \sigma$ and $\rho \subseteq \sigma$.
Therefore $\rho = \sigma$ by the maximality of $\rho$ and so $X \in \rho$. This proves our claim.
Now to prove that $\rho$ is an $s$-filter, take two open subsets $Y$ and $Z$ of $E$ such that $Y \not\in \rho$ and $Z \not\in \rho$.
Then $Y \not\in \mu$ and $Z \not\in \mu$. It follows that there exist $A$, $B \in \rho$ such that $Y \cap A = \emptyset$ and $Z \cap B = \emptyset$.
This implies that $(A \cap B) \cap (Y \cup Z) = \emptyset$. Since $A \cap B \in \rho$, it follows from the definition of a $\tau$-filter that
$Y \cup Z \not\in \rho$.
\end{proof}

In the following result, we characterize maximal filters on a space equipped with the discrete topology. We include its proof for readers' convenience.
This result shows that the converse of the preceding proposition holds true for filters on a discrete space.

\begin{proposition} \label{discrete} Let $E$ be a discrete space and let $\theta$ be a filter $($$\tau$-filter$)$ on $E$. Then the following
conditions are equivalent:

{\em (i)} $\theta$ is maximal;

{\em (ii)} $\theta$ is an $s$-filter;

{\em (iii)} For each subset $A \subseteq E$, either $A \in \theta$ or $A^c \in \theta$;

{\em (iv)} Given any subset $A \subseteq E$, $A \cap X \neq \emptyset$ for all $X \in \theta$ implies $A \in \theta$.
\end{proposition}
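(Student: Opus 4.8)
The plan is to prove the four conditions equivalent by establishing the cycle of implications $(i) \Rightarrow (ii) \Rightarrow (iii) \Rightarrow (iv) \Rightarrow (i)$, exploiting throughout the decisive feature of a discrete space that \emph{every} subset---and in particular every complement $A^c$---is open. This is what allows the saturation condition (iv) in the definition of an $s$-filter to be applied to arbitrary partitions $E = A \cup A^c$, and it is the single place where discreteness genuinely enters.

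The first implication $(i) \Rightarrow (ii)$ is immediate, being exactly the content of Proposition \ref{maximal-filter}, which asserts that every maximal $\tau$-filter of any topological space is an $s$-filter. For $(ii) \Rightarrow (iii)$, I would first observe that any filter contains the whole space $E$ (apply superset closure to any member, using that $E$ is open), and then write $E = A \cup A^c$; since both pieces are open in the discrete topology, the $s$-filter property forces $A \in \theta$ or $A^c \in \theta$. For $(iii) \Rightarrow (iv)$, suppose $A \cap X \neq \emptyset$ for all $X \in \theta$ but $A \notin \theta$; then $(iii)$ gives $A^c \in \theta$, and testing the meeting hypothesis against $X = A^c$ yields $A \cap A^c = \emptyset \neq \emptyset$, a contradiction, so $A \in \theta$.

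The step carrying the real force of the argument is $(iv) \Rightarrow (i)$, which closes the loop. Here I would argue by contradiction: if $\theta$ were not maximal, there would be a $\tau$-filter $\rho$ with $\theta \subsetneq \rho$, so I could pick $A \in \rho \setminus \theta$. Since $A \notin \theta$, the contrapositive of $(iv)$ produces some $X \in \theta$ with $A \cap X = \emptyset$. But $X \in \theta \subseteq \rho$ together with $A \in \rho$ force $\emptyset = A \cap X \in \rho$, contradicting axiom (i) in the definition of a $\tau$-filter. Hence $\theta$ is maximal, completing the cycle.

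I do not anticipate a genuine obstacle, as this is essentially the classical ultrafilter dictionary; the only point demanding care is to keep invoking discreteness precisely where it is needed---namely to guarantee that $A^c$ is an admissible open set in $(ii) \Rightarrow (iii)$ and $(iii) \Rightarrow (iv)$---since the equivalence of maximality with the $s$-filter and complementation conditions is special to the discrete setting and fails for general $\tau$-filters. One could equally run the shorter cycle $(i) \Rightarrow (ii) \Rightarrow (iii) \Rightarrow (i)$ and then append $(iii) \Leftrightarrow (iv)$ separately, but the single four-step cycle above is the most economical route.
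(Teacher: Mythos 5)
Your proof is correct and follows essentially the same route as the paper: the identical cycle $(i) \Rightarrow (ii) \Rightarrow (iii) \Rightarrow (iv) \Rightarrow (i)$, with the same key ingredients in each step (Proposition \ref{maximal-filter} for the first implication, the partition $E = A \cup A^c$ for the second, testing against $X = A^c$ for the third, and a strictly larger filter $\rho$ for the last). The only difference is cosmetic: in $(iv) \Rightarrow (i)$ the paper applies $(iv)$ directly to conclude $A \in \theta$, while you apply its contrapositive and let the contradiction land on $\emptyset \in \rho$ instead.
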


\begin{proof} (i) $\Rightarrow$ (ii) This follows from Proposition \ref{maximal-filter}.

(ii) $\Rightarrow$ (iii) This follows from the fact that $A \cup A^c=E \in \theta$ for every $A \subseteq E$ and that $\theta$ is saturated.

(iii) $\Rightarrow$ (iv) Let $A \subseteq E$ such that $A \not\in \theta$. Then $A^c \in \theta$ by (iii). Note that $A \cap A^c=\emptyset$.

(iv) $\Rightarrow$ (i) Suppose that there exists a filter $\rho$ on $E$ such that $\theta \subseteq \rho$ and $\theta \neq \rho$.
Take $A \in \rho \setminus \theta$. It is clear that $A \cap X \neq \emptyset$ for every $X \in \theta$. Therefore $A \in \theta$ by (iv), a contradiction.
This proves that $\theta$ is a maximal filter.
\end{proof}

The next result is a sharpened version of the equivalence (i) $\Leftrightarrow$ (iii) in Proposition \ref{discrete}.

\begin{proposition} \label{maximal-filter-2} Let $\theta$ be a $\tau$-filter of a topological space $E$. The following conditions are equivalent:

{\em (i)} $\theta$ is maximal;

{\em (ii)} For every open subset $A$ of $E$, either $A \in \theta$ or $(\overline{A})^c \in \theta$.
\end{proposition}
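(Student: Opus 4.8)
The plan is to establish the two implications separately, the whole argument resting on the elementary topological fact that an open set which is disjoint from $A$ is already disjoint from $\overline{A}$; equivalently, $(\overline{A})^c$ is the interior of $A^c$, hence the largest open set missing $A$.

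For (i) $\Rightarrow$ (ii), I would assume $\theta$ maximal, fix an open set $A \notin \theta$, and reuse the set $\mu = \{X \in \tau \mid X \cap B \neq \emptyset \text{ for every } B \in \theta\}$ from the proof of Proposition \ref{maximal-filter}, where it was shown that $\mu \subseteq \theta$ for a maximal filter. Since $A \notin \theta$, we get $A \notin \mu$, so there is some $B \in \theta$ with $A \cap B = \emptyset$. As $B$ is open and disjoint from $A$, every point of $B$ has an open neighbourhood (namely $B$ itself) missing $A$, so no point of $B$ lies in $\overline{A}$; thus $B \subseteq (\overline{A})^c$. Now $(\overline{A})^c$ is open, so condition (iii) in the definition of a $\tau$-filter upgrades $B \in \theta$ to $(\overline{A})^c \in \theta$, which is (ii).

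For (ii) $\Rightarrow$ (i), I would argue by contradiction, following the pattern of the implication (iv) $\Rightarrow$ (i) in Proposition \ref{discrete}. If $\theta$ were not maximal, there would be a $\tau$-filter $\rho$ with $\theta \subsetneq \rho$; pick $A \in \rho \setminus \theta$. Then $A$ is open and $A \notin \theta$, so (ii) delivers $(\overline{A})^c \in \theta \subseteq \rho$. Closure of $\rho$ under intersection gives $A \cap (\overline{A})^c \in \rho$, but $A \subseteq \overline{A}$ forces $A \cap (\overline{A})^c = \emptyset$, contradicting $\emptyset \notin \rho$. Hence $\theta$ is maximal.

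I expect the only real obstacle to be the passage from $A \cap B = \emptyset$ to $B \subseteq (\overline{A})^c$ in the first implication: this is exactly where the openness of the members of a $\tau$-filter is used, and it is what makes the closure (rather than the plain complement $A^c$) appear in the statement. The rest is routine filter bookkeeping, and leaning on the set $\mu$ from Proposition \ref{maximal-filter} keeps the forward implication very short.
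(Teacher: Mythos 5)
Your proposal is correct and follows essentially the same route as the paper's own proof: the forward implication invokes the set $\mu$ from Proposition \ref{maximal-filter} to produce a filter element $B$ disjoint from $A$, then uses openness of $B$ to conclude $B \subseteq (\overline{A})^c$, and the reverse implication is the same contradiction argument via $A \cap (\overline{A})^c = \emptyset$. No gaps; the details you flag as the key point (closure versus plain complement) are exactly what the paper's proof relies on.
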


\begin{proof} (i) $\Rightarrow$ (ii) Let $A$ be an open subset of $E$ such that $A \not\in \theta$. By the proof of Proposition \ref{maximal-filter},
there exists $W \in \theta$ such that $A \cap W = \emptyset$. This yields $W \subseteq E \setminus A = A^c$.
Therefore $W \subseteq \mathring{\widehat{A^c}} = (\overline{A})^c$ and hence $(\overline{A})^c \in \theta$.

(ii) $\Rightarrow$ (i) Suppose that there exists a $\tau$-filter $\rho$ such that $\theta \subseteq \rho$ and $\rho \neq \theta$.
Let $A \in \rho$ with $A \not\in \theta$. Then $(\overline{A})^c \in \theta$ by (ii). This implies that $(\overline{A})^c \in \rho$.
Therefore $\emptyset = A \cap (\overline{A})^c \in \rho$, a contradiction. This shows that $\theta$ is a maximal $\tau$-filter.
\end{proof}

\begin{remark} \rm (i) Using Zorn Lemma, we infer that every $\tau$-filter is contained in a maximal $\tau$-filter.
Hence every $\tau$-filter is contained in an $s$-filter by Proposition \ref{maximal-filter}.

(ii) The converse of Proposition \ref{maximal-filter} is not true, in general. To illustrate this, we consider the set $\omega(a)$ of open neighbourhoods
of some $a \in \mathbb{R}$. Clearly,  $\omega(a)$ is an $s$-filter (see Example \ref{s-filter}). On the other hand, since $A=]a, a+1[ \not\in \omega(a)$
and $(\overline{A})^c \not\in \omega(a)$, it follows from Proposition \ref{maximal-filter-2} that the $\tau$-filter $\omega(a)$ is not maximal.

(iii) It is clear that if a set $E$ is equipped with the discrete topology, then $\widetilde{E}$ (the set of all $s$-filters on $E$) coincides with the Stone space of ultrafilters on $E$. In particular, $\widetilde{\mathbb{N}} = \beta\mathbb{N}$ where $\beta\mathbb{N}$ denotes the set of all ultrafilters on $\mathbb{N}$.
\end{remark}

Given a topological space $(E; \tau)$ and $A \in \tau$, we denote by $\widetilde{A}$ the set of all $s$-filters $\theta$ with $A \in \theta$.
The next proposition which is presumably well known is easy to prove. This result provides three basic useful facts concerning $s$-filters.

\begin{proposition} Let $(E; \tau)$ be a topological space and let $A \in \tau$. Then the following hold:

{\em (i)} The set $\{\widetilde{A} \mid A \in \tau\}$ is a base for the canonical topology on $\widetilde{E}$.

{\em (ii)} The set $[E]$ of principal $s$-filters is dense in $\widetilde{E}$.

{\em (iii)} The application $\varphi: E \rightarrow \widetilde{E}$ defined by $\varphi(x)=\omega(x)$ for all $x \in E$
is continuous on $E$. Moreover, $\varphi$ is injective if and only if $E$ is a $T_0$ space. In this case, $E$ can be identified with $[E]$.
\end{proposition}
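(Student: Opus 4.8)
The plan is to derive all three assertions from a single structural identity for the basic sets $\widetilde{A}$, handling (i)--(iii) in order. For (i), I would first check the two requirements for a family to be a base. The covering requirement is immediate: every $s$-filter $\theta$ is nonempty and upward closed, so picking any $A \in \theta$ and applying axiom (iii) with $A \subseteq E$ gives $E \in \theta$; thus the basic set attached to the open set $E$ is all of $\widetilde{E}$, and the family covers. The stability under finite intersection rests on the key identity $\widetilde{A} \cap \widetilde{B} = \widetilde{A \cap B}$ for $A, B \in \tau$: if $\theta$ contains both $A$ and $B$ then $A \cap B \in \theta$ by filter axiom (ii), and conversely if $A \cap B \in \theta$ then $A, B \in \theta$ by axiom (iii), since $A \cap B$ is contained in each of the open sets $A$ and $B$. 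As the family covers $\widetilde{E}$ and is closed under pairwise intersection, it is a base for a topology (and $\widetilde{\emptyset} = \emptyset$ since no filter contains $\emptyset$).

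For (ii), I would show that $[E]$ meets every nonempty basic open set, which is equivalent to density. Given a nonempty open set $A$, I would choose a point $x \in A$; then $A$ is an open neighbourhood of $x$, so $A \in \omega(x)$ and therefore $\omega(x) \in \widetilde{A} \cap [E]$. Hence every nonempty $\widetilde{A}$ contains a principal $s$-filter, which is exactly the density of $[E]$ in $\widetilde{E}$.

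For (iii), the decisive computation is $\varphi^{-1}(\widetilde{A}) = \{x \in E \mid A \in \omega(x)\} = A$, where the last equality uses that, for an open set $A$, one has $A \in \omega(x)$ if and only if $x \in A$. This shows the preimage of each basic open set is open, so $\varphi$ is continuous. For the injectivity clause, I would note that $\varphi(x) = \varphi(y)$ means $\omega(x) = \omega(y)$, that is, $x$ and $y$ have exactly the same open neighbourhoods; hence $\varphi$ is injective precisely when no two distinct points are topologically indistinguishable, which is the $T_0$ axiom. For the final identification I would not stop at a continuous bijection onto $[E]$: I would also verify that $\varphi$ is open onto its image by establishing $\varphi(A) = \widetilde{A} \cap [E]$ (again from $A \in \omega(x) \Leftrightarrow x \in A$), so that $\varphi$ restricts to a homeomorphism $E \to [E]$.

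Every step here is elementary, and the identity $\widetilde{A} \cap \widetilde{B} = \widetilde{A \cap B}$ does most of the work; the covering and density arguments are then one-line consequences. The one place I expect genuine care to be needed is the closing clause: asserting that $E$ \emph{can be identified with} $[E]$ is a homeomorphism claim, so beyond continuity and injectivity one must also check that $\varphi$ is relatively open, which is exactly where the equality $\varphi(A) = \widetilde{A} \cap [E]$ enters.
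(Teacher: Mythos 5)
Your proposal is correct; the paper gives no proof of this proposition at all (it is introduced as ``presumably well known'' and ``easy to prove''), so your writeup simply supplies the standard missing details, and they all check out. The key identity $\widetilde{A} \cap \widetilde{B} = \widetilde{A \cap B}$, the computation $\varphi^{-1}(\widetilde{A}) = A$, and the relative openness via $\varphi(A) = \widetilde{A} \cap [E]$ are exactly the right ingredients, and your insistence that the final clause of (iii) requires checking openness onto the image (so that ``identified with'' means homeomorphic, not merely continuously bijective) is the correct level of care.
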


\begin{theorem} \label{f-tilde} Let $(E; \tau)$ and $(F; \delta)$ be two topological spaces and let $f:E \rightarrow F$ be a continuous application.
Consider the application $\widetilde{f}: \widetilde{E} \rightarrow \widetilde{F}$ defined by $\widetilde{f}(\theta)=\{B \in \delta \mid f^{-1}(B) \in \theta\}$.
Then $\widetilde{f}$ is a well defined continuous application. Moreover, we have $\widetilde{f}(\omega(x))=\omega(f(x))$ for every $x \in E$.
\end{theorem}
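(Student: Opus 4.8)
The plan is to establish the three assertions separately, each of which reduces to elementary properties of preimages together with the continuity of $f$ (the latter being exactly what guarantees $f^{-1}(\delta) \subseteq \tau$). First I would show that $\widetilde{f}$ is well defined, i.e. that $\widetilde{f}(\theta)$ satisfies the four axioms (i)--(iv) of an $s$-filter of $F$ whenever $\theta$ is an $s$-filter of $E$. Nonemptiness is immediate, since $f^{-1}(F)=E \in \theta$ gives $F \in \widetilde{f}(\theta)$. Condition (i) holds because $f^{-1}(\emptyset)=\emptyset \not\in \theta$, so $\emptyset \not\in \widetilde{f}(\theta)$. For (ii) and (iii) I would use that $f^{-1}$ commutes with intersections and is monotone: if $B_1,B_2 \in \widetilde{f}(\theta)$ then $f^{-1}(B_1 \cap B_2)=f^{-1}(B_1)\cap f^{-1}(B_2) \in \theta$, and if $B_1 \in \widetilde{f}(\theta)$ with $B_1 \subseteq B_2 \in \delta$ then $f^{-1}(B_1) \subseteq f^{-1}(B_2) \in \tau$, whence $f^{-1}(B_2) \in \theta$. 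Condition (iv) is where saturation of $\theta$ enters: if $B_1,B_2 \in \delta$ with $B_1 \cup B_2 \in \widetilde{f}(\theta)$, then $f^{-1}(B_1) \cup f^{-1}(B_2)=f^{-1}(B_1 \cup B_2) \in \theta$ with both $f^{-1}(B_i) \in \tau$, so (iv) for $\theta$ forces $f^{-1}(B_1) \in \theta$ or $f^{-1}(B_2) \in \theta$, i.e. $B_1 \in \widetilde{f}(\theta)$ or $B_2 \in \widetilde{f}(\theta)$.

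Next I would prove continuity of $\widetilde{f}$ using the base $\{\widetilde{B} \mid B \in \delta\}$ for the canonical topology on $\widetilde{F}$ supplied by the preceding proposition; it suffices to check that the preimage of each basic open set is open. The key computation is the identity $\widetilde{f}^{-1}(\widetilde{B})=\widetilde{f^{-1}(B)}$ for $B \in \delta$: indeed $\theta \in \widetilde{f}^{-1}(\widetilde{B})$ means $B \in \widetilde{f}(\theta)$, which by the definition of $\widetilde{f}$ means $f^{-1}(B) \in \theta$, that is $\theta \in \widetilde{f^{-1}(B)}$. Since $f^{-1}(B) \in \tau$ by continuity of $f$, the set $\widetilde{f^{-1}(B)}$ is a basic open subset of $\widetilde{E}$, and continuity of $\widetilde{f}$ follows at once.

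Finally, for the identity on principal $s$-filters I would simply unwind the definitions: $\widetilde{f}(\omega(x))=\{B \in \delta \mid f^{-1}(B) \in \omega(x)\}$. Because $f$ is continuous, $f^{-1}(B)$ is automatically open, so the membership $f^{-1}(B) \in \omega(x)$ reduces to the condition $x \in f^{-1}(B)$, that is $f(x) \in B$. Hence $\widetilde{f}(\omega(x))=\{B \in \delta \mid f(x) \in B\}=\omega(f(x))$, as claimed. I do not anticipate any genuine obstacle: every step rests on the set-theoretic behaviour of $f^{-1}$ and on the single hypothesis of continuity. The only point deserving care is the saturation axiom (iv) in the well-definedness step, as it is the sole condition peculiar to $s$-filters rather than ordinary $\tau$-filters; but even there the verification is immediate once one observes that $f^{-1}$ preserves unions.
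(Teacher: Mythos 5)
Your proof is correct and takes essentially the same approach as the paper: the continuity argument rests on exactly the same key identity $\widetilde{f}^{-1}(\widetilde{B})=\widetilde{f^{-1}(B)}$ for $B \in \delta$, verified by the same chain of equivalences. The paper dismisses well-definedness and the identity $\widetilde{f}(\omega(x))=\omega(f(x))$ as immediate, whereas you spell these verifications out in full (correctly, including the role of saturation in axiom (iv) and of continuity in reducing $f^{-1}(B)\in\omega(x)$ to $f(x)\in B$).
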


\begin{proof} To prove that $\widetilde{f}$ is continuous, it suffices to show that $\widetilde{f}^{-1}(\widetilde{B})=\widetilde{f^{-1}(B)}$
for every open subset $B$ of $F$. Take $B \in \delta$. Then $$\theta \in \widetilde{f}^{-1}(\widetilde{B}) \Leftrightarrow \widetilde{f}(\theta) \in \widetilde{B} \Leftrightarrow B \in \widetilde{f}(\theta) \Leftrightarrow f^{-1}(B) \in \theta \Leftrightarrow \theta \in \widetilde{f^{-1}(B)}.$$
The remaining assertions are immediate.
\end{proof}

Recall that a topological space $(E; \tau)$ is called $T_3$ if for every $x \in E$, the set of all closed neighbourhoods of $x$ is
a fundamental system of neighborhoods of $x$.
Let $\theta \in \widetilde{E}$ and let $a \in E$. Then $a$ is said to be a {\it limit point} of $\theta$ if $\theta$ converges to $a$
(i.e. if $\omega(a) \subseteq \theta$). The element $a$ is called  an {\it adherent point} of $\theta$ if $a \in \cap_{A \in \theta} \overline{A}$.
Note that every limit point is an adherent point. An $s$-filter $\theta$ is called {\it free} if $\cap_{A \in \theta} \overline{A} = \emptyset$.

Next, we investigate some properties of $s$-filters on $T_3$ spaces.

\begin{proposition} \label{T3-space} Let $(E; \tau)$ be a $T_3$ space. Then every $s$-filter on $E$ is either free or convergent.
\end{proposition}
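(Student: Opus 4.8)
The plan is to prove the sharper statement that \emph{every adherent point of an $s$-filter on a $T_3$ space is already a limit point}. The dichotomy in the proposition then follows at once: if $\theta$ is not free, then by definition $\cap_{A \in \theta} \overline{A} \neq \emptyset$, so $\theta$ admits an adherent point $a$; the sharper claim turns $a$ into a limit point, and hence $\theta$ is convergent.

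So I would fix an $s$-filter $\theta$ on the $T_3$ space $E$, assume it is not free, and choose an adherent point $a \in \cap_{A \in \theta} \overline{A}$. To establish convergence I must verify the inclusion $\omega(a) \subseteq \theta$; accordingly I take an arbitrary open neighbourhood $V$ of $a$ and set out to show that $V \in \theta$. Note at the outset that $E \in \theta$, since $\theta$ is nonempty and upward closed under condition (iii).

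The heart of the argument is the use of regularity. By the $T_3$ hypothesis the closed neighbourhoods of $a$ form a fundamental system of neighbourhoods, so I can interpose an open set $W$ with $a \in W \subseteq \overline{W} \subseteq V$. The crucial observation is that this sandwich produces an open cover of the whole space: since $\overline{W} \subseteq V$, every point lies either in $V$ or in $(\overline{W})^c$, whence $E = V \cup (\overline{W})^c$. Because $E \in \theta$, the saturation condition (iv) now forces $V \in \theta$ or $(\overline{W})^c \in \theta$, and it only remains to rule out the second possibility.

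This last step is exactly where adherence is used, and it is the point that requires the most care. The set $W$ is an open neighbourhood of $a$ that is disjoint from $(\overline{W})^c$, so $a$ is not in the closure $\overline{(\overline{W})^c}$. On the other hand, if $(\overline{W})^c$ belonged to $\theta$, then the fact that $a$ is an adherent point of $\theta$ would give $a \in \overline{(\overline{W})^c}$, a contradiction. Hence $(\overline{W})^c \notin \theta$, and therefore $V \in \theta$, which completes the proof. The main obstacle is conceptual rather than computational: one must recognize that saturation should be applied not to $V$ by itself but to the cover $E = V \cup (\overline{W})^c$ manufactured by regularity, after which the adherent-point hypothesis cleanly eliminates the unwanted alternative.
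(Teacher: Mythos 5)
Your proof is correct and follows essentially the same route as the paper's: interpose $W$ with $\overline{W} \subseteq V$ via regularity, observe $V \cup (\overline{W})^c = E \in \theta$, use adherence of $a$ together with $W \cap (\overline{W})^c = \emptyset$ to exclude $(\overline{W})^c \in \theta$, and conclude $V \in \theta$ by saturation. Your write-up merely makes explicit the closure argument that the paper compresses into one sentence.
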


\begin{proof} Take an $s$-filter $\theta$ on $E$ and assume that $\theta$ is not free. Let $a \in \cap_{A \in \theta} \overline{A}$ and $V \in \omega(a)$.
Then there exists $W \in \omega(a)$ such that $W \subseteq \overline{W} \subseteq V$. Therefore $V \cup (\overline{W})^c=E \in \theta$.
Since $a$ is an adherent point of $\theta$ and $W \cap (\overline{W})^c = \emptyset$, it follows that $(\overline{W})^c \not\in \theta$.
This implies that $V \in \theta$ as $\theta$ is an $s$-filter. Consequently, $\omega(a) \subseteq \theta$, i.e. $\theta$ converges to $a$.
\end{proof}

\begin{corollary} If $(E; \tau)$ is a compact space, then every $s$-filter on $E$ is convergent.
\end{corollary}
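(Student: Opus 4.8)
The plan is to deduce this corollary from Proposition \ref{T3-space} by ruling out the free alternative, using compactness only through the finite intersection property. The key observation is that the notion of compactness in play here (quasi-compactness together with the Hausdorff separation, as opposed to the mere quasi-compactness studied in Proposition \ref{quasi-compact-2}) forces the space to be $T_3$: a compact Hausdorff space is normal, hence regular, so its closed neighbourhoods of any point form a fundamental system of neighbourhoods. Consequently Proposition \ref{T3-space} applies and tells us that every $s$-filter on $E$ is either free or convergent. It therefore suffices to show that no $s$-filter on a compact space can be free, i.e. that $\bigcap_{A \in \theta} \overline{A} \neq \emptyset$ for every $s$-filter $\theta$.

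To establish this, I would fix an $s$-filter $\theta$ on $E$ and consider the family $\{\overline{A} \mid A \in \theta\}$ of closed subsets of $E$. First I would check that this family has the finite intersection property. Given finitely many members $A_1, \dots, A_n \in \theta$, condition (ii) in the definition of a $\tau$-filter yields $A_1 \cap \dots \cap A_n \in \theta$, and condition (i) ensures this intersection is nonempty. Since $A_1 \cap \dots \cap A_n \subseteq \overline{A_1} \cap \dots \cap \overline{A_n}$, the intersection of the corresponding closures is nonempty as well. Invoking quasi-compactness of $E$, which guarantees that any family of closed sets with the finite intersection property has nonempty total intersection, I obtain $\bigcap_{A \in \theta} \overline{A} \neq \emptyset$. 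Hence $\theta$ admits an adherent point and is not free.

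Combining the two steps, Proposition \ref{T3-space} leaves convergence as the only possibility, which proves the corollary. The only delicate point is the role of separation: the finite intersection argument by itself merely produces an adherent point and would not suffice on a space that is only quasi-compact, since Proposition \ref{T3-space} genuinely requires the $T_3$ hypothesis to upgrade an adherent point of an $s$-filter to a limit point. Thus the whole argument hinges on reading ``compact'' as carrying the Hausdorff (hence regular) separation, with compactness itself serving solely to defeat freeness.
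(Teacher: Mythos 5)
Your proposal is correct and follows essentially the same route as the paper: the paper's one-line proof invokes the fact that over a compact space every $\tau$-filter has an adherent point (your finite intersection argument), which rules out the free alternative in Proposition \ref{T3-space}, compactness being understood throughout the paper in the Bourbaki sense (quasi-compact and Hausdorff, hence $T_3$) just as you read it. Your write-up merely makes explicit the steps the paper leaves implicit.
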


\begin{proof} This follows from the fact that over a compact space $(E; \tau)$, every $\tau$-filter has an adherent point.
\end{proof}

\begin{proposition} \label{quasi-compact} Let $(E; \tau)$ be a topological space such that every $s$-filter on $E$ is convergent. Then $E$ is quasi-compact.
\end{proposition}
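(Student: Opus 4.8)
The plan is to argue by contraposition: assuming $E$ is not quasi-compact, I would produce an $s$-filter on $E$ that fails to converge, contradicting the hypothesis. Since $E$ is not quasi-compact, there is an open cover $\{U_i\}_{i \in I}$ admitting no finite subcover. Writing $F_i = E \setminus U_i$, the closed sets $\{F_i\}_{i \in I}$ then enjoy the finite intersection property, because $F_{i_1} \cap \cdots \cap F_{i_n} = E \setminus (U_{i_1} \cup \cdots \cup U_{i_n}) \neq \emptyset$ for every finite subfamily.

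First I would enlarge this family: the sets $\{F_i\}$ generate a proper filter on the underlying set $E$, which by Zorn's lemma (the ultrafilter lemma) is contained in an ultrafilter $\mathcal{V}$ on $E$; in particular $F_i \in \mathcal{V}$ for all $i$. Next I would restrict $\mathcal{V}$ to the topology by setting $\theta = \{W \in \tau \mid W \in \mathcal{V}\}$, the collection of open members of $\mathcal{V}$, and check that $\theta$ is an $s$-filter. The filter axioms (i)--(iii) are inherited directly from $\mathcal{V}$ (note $E \in \theta$ and $\emptyset \notin \theta$), while the saturation condition (iv) is exactly the defining disjunction property of an ultrafilter: if $A, B \in \tau$ with $A \cup B \in \theta \subseteq \mathcal{V}$, then $A \in \mathcal{V}$ or $B \in \mathcal{V}$, hence $A \in \theta$ or $B \in \theta$.

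By hypothesis $\theta$ converges to some point $a \in E$, so $\omega(a) \subseteq \theta$. Since $\{U_i\}$ covers $E$, we have $a \in U_{i_0}$ for some index $i_0$, whence $U_{i_0} \in \omega(a) \subseteq \theta \subseteq \mathcal{V}$. But $F_{i_0} = E \setminus U_{i_0}$ also lies in $\mathcal{V}$, so $\emptyset = U_{i_0} \cap F_{i_0} \in \mathcal{V}$, which is absurd. This contradiction shows that $E$ is quasi-compact.

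I would expect the only delicate point to be the verification that $\theta$ is genuinely an $s$-filter rather than a mere $\tau$-filter; the saturation axiom (iv) is what makes the restriction of an ultrafilter to the open sets land in $\widetilde{E}$, and it is precisely here that passing through an honest ultrafilter on all subsets of $E$ pays off. Trying instead to extend the filter of "large" open sets to a maximal $\tau$-filter within $\tau$ would fail, since such an extension need not retain the closed complements $F_i$ that are needed to close the argument.
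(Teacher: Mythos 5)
Your proof is correct, and it turns on exactly the same key observation as the paper's: the open members of an ultrafilter on the underlying set form an $s$-filter, the saturation axiom (iv) being inherited from the disjunction property of ultrafilters (cf.\ Proposition \ref{discrete}). The difference lies in the surrounding architecture. The paper's proof is two lines: take an \emph{arbitrary} ultrafilter $\delta$ on $E$, note that $\delta \cap \tau$ is an $s$-filter, hence convergent by hypothesis, hence $\delta$ itself converges; it then concludes by silently invoking the standard criterion that a space is quasi-compact if and only if every ultrafilter on it converges. You instead argue by contraposition from the open-cover definition: a cover $\{U_i\}$ with no finite subcover gives closed complements $F_i$ with the finite intersection property, you thread an ultrafilter $\mathcal{V}$ through them, and the limit $a$ of the $s$-filter $\mathcal{V} \cap \tau$ must lie in some $U_{i_0}$, forcing $\emptyset = U_{i_0} \cap F_{i_0} \in \mathcal{V}$. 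In effect you have inlined the proof of the ultrafilter criterion for quasi-compactness rather than quoting it: your argument is longer but self-contained, needing only the ultrafilter lemma, while the paper's is shorter but leans on a known characterization. Your closing remark is also essentially right, though for a slightly different reason than the one you give: extending the open supersets of the finite intersections of the $F_i$ to a maximal $\tau$-filter would indeed produce an $s$-filter (Proposition \ref{maximal-filter}), but its limit point $a \in U_{i_0}$ yields no contradiction, since without a regularity-type hypothesis the neighbourhoods of $a$ may meet every open superset of $F_{i_0}$; it is the detour through an honest ultrafilter on the full power set that delivers the disjointness needed at the end.
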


\begin{proof} Let $\delta$ be an ultrafilter on $E$. It is clear that the set $\delta \cap \tau$ of all open subsets of $E$ belonging to $\delta$ is an $s$-filter on $E$. By hypothesis, $\delta \cap \tau$ is convergent and hence $\delta$ is convergent.
\end{proof}

\begin{proposition} \label{quasi-compact-2} Let $(E; \tau)$ be a topological space. Then $\widetilde{E}$ is always quasi-compact.
\end{proposition}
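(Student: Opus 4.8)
The plan is to verify quasi-compactness directly on the canonical base $\{\widetilde{A}\mid A\in\tau\}$. First I would record the two algebraic identities that make $A\mapsto\widetilde{A}$ a lattice homomorphism: for open $A,B$ one has $\widetilde{A\cap B}=\widetilde{A}\cap\widetilde{B}$ (immediate from axioms (ii) and (iii)) and, crucially, $\widetilde{A\cup B}=\widetilde{A}\cup\widetilde{B}$, where the nontrivial inclusion $\widetilde{A\cup B}\subseteq\widetilde{A}\cup\widetilde{B}$ is exactly the saturation axiom (iv). I would also note that $\widetilde{C}=\widetilde{E}$ forces $C=E$: if $C\neq E$, pick $x\in E\setminus C$ and observe that $C\notin\omega(x)$, so $\omega(x)\in\widetilde{E}\setminus\widetilde{C}$.

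Since $\{\widetilde{A}\}$ is a base, it suffices to extract a finite subcover from an arbitrary basic cover $\widetilde{E}=\bigcup_{i\in I}\widetilde{A_i}$. By the union identity, a finite set $F\subseteq I$ yields a subcover iff $\widetilde{\bigcup_{i\in F}A_i}=\widetilde{E}$, i.e. iff $\bigcup_{i\in F}A_i=E$. So the theorem reduces to the following statement: if no finite union of the $A_i$ equals $E$, then the $\widetilde{A_i}$ do not cover $\widetilde{E}$; equivalently, there is an $s$-filter containing none of the $A_i$.

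To produce such an $s$-filter I would pass to the family $\mathcal{U}$ of all finite unions $\bigcup_{i\in F}A_i$, which is closed under finite unions and, by hypothesis, omits $E$; by saturation an $s$-filter avoids every $A_i$ iff it avoids every member of $\mathcal{U}$. Then I would apply Zorn's Lemma to the poset of $\tau$-filters avoiding every element of $\mathcal{U}$ (nonempty, since $\{E\}$ qualifies, and closed under unions of chains), obtaining a maximal such filter $\rho$. The heart of the argument is to check that $\rho$ is saturated. Given open $B,C$ with $B\cup C\in\rho$ but $B,C\notin\rho$: if $B\cap R=\emptyset$ for some $R\in\rho$, then intersecting $B\cup C$ with $R$ forces $C\cap R\in\rho$ and hence $C\in\rho$, a contradiction; thus $B$ meets every member of $\rho$, so the filter generated by $\rho\cup\{B\}$ properly extends $\rho$ and, by maximality, must contain some $U_1\in\mathcal{U}$, giving $R\cap B\subseteq U_1$ for some $R\in\rho$. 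Symmetrically $R'\cap C\subseteq U_2$ for some $R'\in\rho$, $U_2\in\mathcal{U}$. Then $(R\cap R')\cap(B\cup C)\subseteq U_1\cup U_2\in\mathcal{U}$, and since the left-hand side lies in $\rho$ we obtain $U_1\cup U_2\in\rho$, contradicting that $\rho$ avoids $\mathcal{U}$.

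The step I expect to be the main obstacle is precisely this verification that the maximal avoiding filter is saturated: a maximal filter avoiding the $A_i$ need not be a maximal $\tau$-filter, so Proposition \ref{maximal-filter} does not apply, and naively enlarging $\rho$ to a maximal $\tau$-filter might swallow one of the $A_i$. The trick that unlocks the argument is replacing $\{A_i\}$ by its closure $\mathcal{U}$ under finite unions, so that the two containments $R\cap B\subseteq U_1$ and $R'\cap C\subseteq U_2$ combine into a single element $U_1\cup U_2$ still inside $\mathcal{U}$, turning the potential failure of saturation into a direct contradiction.
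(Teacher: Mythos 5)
Your proof is correct, but it takes a genuinely different route from the paper. You argue directly on covers: after establishing that $A\mapsto\widetilde{A}$ turns finite unions and intersections of open sets into unions and intersections of basic open sets (with saturation axiom (iv) giving the nontrivial inclusion $\widetilde{A\cup B}\subseteq\widetilde{A}\cup\widetilde{B}$, and the principal filters $\omega(x)$ witnessing $\widetilde{C}=\widetilde{E}\Rightarrow C=E$), you reduce quasi-compactness to producing an $s$-filter avoiding a family $\{A_i\}$ with no finite subcover of $E$, and you get one by a Zorn argument: a $\tau$-filter maximal among those avoiding the ideal $\mathcal{U}$ of finite unions of the $A_i$ is automatically saturated. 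This last verification is the heart of your proof and is carried out correctly; it is the topological analogue of the classical prime filter theorem (a filter maximal with respect to avoiding an ideal is prime), and your observation that closing $\{A_i\}$ under finite unions is what makes the two containments $R\cap B\subseteq U_1$ and $R'\cap C\subseteq U_2$ combine into a contradiction is exactly the right point to isolate. The paper instead argues by convergence: it pulls an $s$-filter $\Gamma$ on $\widetilde{E}$ back to the $s$-filter $\theta=\{\Omega_E\mid\Omega\in\Gamma\}$ on $E$, where $\Omega_E=\{x\in E\mid\omega(x)\in\Omega\}$, shows $\Gamma$ converges to $\theta$ (so every $s$-filter on $\widetilde{E}$ converges), and then invokes Proposition \ref{quasi-compact}, which converts convergence of all $s$-filters into quasi-compactness via ultrafilters. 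The paper's route is shorter because it reuses its own machinery and in fact proves something stronger (a completeness-type convergence property of $\widetilde{E}$, exploiting the density of $[E]$); yours is more self-contained and lattice-theoretic, needing only the definitions, the base identities, and Zorn's Lemma, and it exposes the combinatorial mechanism (primeness of maximal avoiding filters) that makes $\widetilde{E}$ behave like a Wallman- or Stone-type compactification. Both arguments rest on a choice principle of comparable strength.
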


\begin{proof} For each open subset $\Omega$ of $\widetilde{E}$, let $\Omega_E=\{x \in E \mid \omega(x) \in \Omega\}$. Now take an $s$-filter $\Gamma$
on $\widetilde{E}$. It is easily seen that $\theta=\{\Omega_E \mid \Omega \in \Gamma\} \in \widetilde{E}$. Moreover, $\Gamma$ converges to $\theta$.
Now the result follows from Proposition \ref{quasi-compact}.
\end{proof}

\begin{proposition} Let $E$ be a $T_1$ space $($i.e. every singleton in $E$ is closed$)$ and let $\theta$ be an $s$-filter on $E$.
Then $\cap_{A \in \theta} A \neq \emptyset$ if and only if $\theta$ is principal.
\end{proposition}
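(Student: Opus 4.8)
The plan is to prove the two implications separately, the easy one requiring nothing beyond the definition of a principal $s$-filter, and the hard one exploiting the $T_1$ hypothesis together with the saturation axiom (iv). If $\theta$ is principal, say $\theta = \omega(x)$, then every element of $\theta$ is an open set containing $x$, so $x$ lies in every $A \in \theta$; hence $x \in \cap_{A \in \theta} A$ and the intersection is nonempty. Note that this direction uses neither $T_1$ nor saturation.

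For the converse I would start from a point $x \in \cap_{A \in \theta} A$ and first record the inclusion $\theta \subseteq \omega(x)$: every $A \in \theta$ contains $x$ and is open, hence is an open neighbourhood of $x$. The heart of the argument is the reverse inclusion $\omega(x) \subseteq \theta$. Given an open set $V$ with $x \in V$, I would invoke the $T_1$ axiom to note that $E \setminus \{x\}$ is open, and then use the decomposition $E = V \cup (E \setminus \{x\})$, which holds precisely because $x \in V$. Since $\theta$ is a nonempty upward-closed $\tau$-filter we have $E \in \theta$, so saturation (iv) applied to the two open sets $V$ and $E \setminus \{x\}$ forces $V \in \theta$ or $E \setminus \{x\} \in \theta$. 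The second alternative is impossible, because every member of $\theta$ contains $x$ while $E \setminus \{x\}$ does not. Hence $V \in \theta$, which gives $\omega(x) \subseteq \theta$ and therefore $\theta = \omega(x)$, i.e. $\theta$ is principal.

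I do not expect a serious obstacle here; the proof is short once the right two-set cover of $E$ is identified. The only real subtlety, and the point I would emphasize, is the precise interplay between the two hypotheses: the $T_1$ axiom is exactly what makes $E \setminus \{x\}$ open so that the saturation condition can be brought to bear on the cover $E = V \cup (E \setminus \{x\})$, while the assumption that $x$ belongs to $\cap_{A \in \theta} A$ is exactly what eliminates the unwanted alternative $E \setminus \{x\} \in \theta$. I would also take care to state explicitly that $E \in \theta$ (by nonemptiness of $\theta$ and upward closure (iii)), since this is what guarantees that saturation has a set to decompose.
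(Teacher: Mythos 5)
Your proof is correct and is essentially identical to the paper's: both directions proceed exactly as in the published argument, with the decisive step being the application of saturation to the open cover $E = V \cup \{x\}^{c}$ (the $T_1$ axiom making $\{x\}^{c}$ open) and the elimination of $\{x\}^{c} \in \theta$ via $x \in \cap_{A \in \theta} A$. No differences worth noting.
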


\begin{proof} The sufficiency is clear. Conversely, suppose that $\cap_{A \in \theta} A \neq \emptyset$ and let $a \in \cap_{A \in \theta} A$.
Then $\theta \subseteq \omega(a)$. Now take $V \in \omega(a)$. Therefore $\{a\}^c \cup V = E \in \theta$. Since $\{a\}^c \not\in \theta$ and $\theta$ is an
$s$-filter, we have $V \in \theta$. It follows that $\omega(a) \subseteq \theta$ and so $\theta = \omega(a)$ is principal.
\end{proof}

Let $(E; \tau)$ be a locally compact space which is not compact. In particular, $E$ is a regular space (and it is also a separated $T_3$ space).
The set $\{K^c \mid K$ is a compact subset of $E\}$ will be denoted by $\omega(\infty)$. Note that $\omega(\infty)$ is a free $\tau$-filter on $E$.

\begin{theorem} \label{converge-or-infinity} Let $E$ be a locally compact space which is not compact.
Then every $s$-filter on $E$ is either convergent or it is a free $s$-filter containing $\omega(\infty)$.
In the latter case we say that $\theta$ converges to infinity.
\end{theorem}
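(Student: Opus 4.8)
The plan is to combine the dichotomy furnished by Proposition~\ref{T3-space} with the structural role of the filter $\omega(\infty)$. Since $E$ is locally compact, non-compact, it is in particular a $T_3$ space, so Proposition~\ref{T3-space} tells us that any $s$-filter $\theta$ on $E$ is either free or convergent. This immediately splits the argument into two cases, and the only real work lies in analyzing the free case: I must show that a free $s$-filter necessarily contains $\omega(\infty)$.

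First I would treat the convergent case, which requires nothing: if $\theta$ is convergent, we are done by the first alternative. So assume $\theta$ is free, meaning $\cap_{A \in \theta} \overline{A} = \emptyset$. The goal is to prove $K^c \in \theta$ for every compact subset $K$ of $E$. Fix such a compact $K$. The key idea is to use local compactness to cover $K$ by open sets with compact closure and then exploit the saturation property (iv) of $\theta$. For each point $a \in K$, local compactness provides an open neighbourhood $V_a$ of $a$ whose closure $\overline{V_a}$ is compact; by compactness of $K$ I extract a finite subcover $V_{a_1}, \dots, V_{a_m}$ so that $K \subseteq V_{a_1} \cup \cdots \cup V_{a_m} =: V$, where $\overline{V} = \overline{V_{a_1}} \cup \cdots \cup \overline{V_{a_m}}$ is compact.

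The crucial step is to show $\overline{V} \notin$-related data forces $K^c \in \theta$; concretely, I would show that no $a_i$ can be an adherent point and leverage this. The cleanest route is as follows. Since $\theta$ is free, for each point $a$ there exists $A_a \in \theta$ with $a \notin \overline{A_a}$, equivalently an open neighbourhood of $a$ disjoint from $A_a$; using regularity one finds an open $W_a \ni a$ with $\overline{W_a} \cap A_a = \emptyset$, so $(\overline{W_a})^c \supseteq A_a$ lies in $\theta$. Covering the compact set $\overline{V}$ by finitely many such $W_{b_1}, \dots, W_{b_k}$ and intersecting the corresponding members $(\overline{W_{b_j}})^c \in \theta$ yields an element of $\theta$ contained in $(\overline{V})^c \subseteq K^c$, whence $K^c \in \theta$ by axiom (iii). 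As $K$ was arbitrary, $\omega(\infty) \subseteq \theta$, completing the free case.

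The main obstacle I anticipate is the interplay between the open-set formalism of $s$-filters (where only open members are allowed) and the compactness covering argument, which naturally produces closed sets such as $\overline{W_a}$ and $\overline{V}$. I must be careful to pass to \emph{open} complements $(\overline{W_a})^c$ at each stage so that every set I declare to be in $\theta$ is genuinely open, and to verify that finite intersections stay inside $\theta$ via axiom~(ii). A secondary subtlety is confirming that $\theta$ being free (defined via closures $\cap \overline{A} = \emptyset$) delivers, for each point, a \emph{closed} neighbourhood missing some member of $\theta$; this is exactly where the $T_3$/regularity hypothesis is consumed, and it is what makes the covering of the compact closure $\overline{V}$ effective rather than merely the covering of $K$ itself.
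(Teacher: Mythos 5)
Your proof is correct, and it opens exactly as the paper does: Proposition \ref{T3-space} reduces everything to showing that a free $s$-filter contains $\omega(\infty)$. But your treatment of the free case runs along the dual formulation of compactness. The paper argues via the finite intersection property: since $\theta$ is free, $\cap_{A \in \theta}(\overline{A} \cap K) = \emptyset$, and these are closed subsets of the compact set $K$, so finitely many $A_1, \dots, A_m \in \theta$ satisfy $\overline{A_1} \cap \dots \cap \overline{A_m} \cap K = \emptyset$; then $A_1 \cap \dots \cap A_m \in \theta$ lies inside $K^c$ and axiom (iii) finishes. You instead build, for each point $a$, an auxiliary neighbourhood $W_a$ whose closed complement $(\overline{W_a})^c$ is forced into $\theta$ by axiom (iii), and extract a finite subcover. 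Both are valid, but note that your two auxiliary constructions are superfluous. First, regularity is not needed to arrange $\overline{W_a} \cap A_a = \emptyset$: if $W_a$ is any open neighbourhood of $a$ disjoint from $A_a$, then automatically $\overline{W_a} \cap A_a = \emptyset$, because a point of the \emph{open} set $A_a$ lying in $\overline{W_a}$ would have $A_a$ itself as a neighbourhood meeting $W_a$. Second, the relatively compact open set $V \supseteq K$ plays no role: covering $K$ itself by finitely many $W_{b_j}$ already gives $\cap_j (\overline{W_{b_j}})^c \subseteq (\cup_j W_{b_j})^c \subseteq K^c$, so local compactness need never be invoked beyond guaranteeing the $T_3$ hypothesis of Proposition \ref{T3-space}. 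Stripped of these detours, your covering argument consumes exactly the same resources as the paper's FIP argument --- freeness of $\theta$, compactness of $K$, and the Hausdorff property (which both you and the paper use tacitly to know that $K^c$ is open, a prerequisite for applying axiom (iii)) --- it is simply the open-cover face of the same compactness coin.
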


\begin{proof} Since $E$ is locally compact, $E$ is a $T_3$ space. According to Proposition \ref{T3-space}, every $s$-filter on $E$ is either free or convergent. Let $\theta$ be a free $s$-filter. To prove that $\omega(\infty) \subseteq \theta$, let $K$ be a compact subset of $E$. Since $\theta$ is free,
$\cap_{A \in \theta} \overline{A} = \emptyset$ and hence $\cap_{A \in \theta} (\overline{A} \cap K) = \emptyset$. Note that $\overline{A} \cap K$
is closed for each $A \in \theta$ because $E$ is separated. Since $K$ is compact, there exists a finite subset $\mathcal{F} \subseteq \theta$ such that
$\cap_{A \in \mathcal{F}} (\overline{A} \cap K) = (\cap_{A \in \mathcal{F}} \overline{A}) \cap K = \emptyset$.
This implies that $\cap_{A \in \mathcal{F}} A \subseteq \cap_{A \in \mathcal{F}} \overline{A} \subseteq K^c$.
Note that $\cap_{A \in \mathcal{F}} A \in \theta$ since $\mathcal{F}$ is finite. Then $K^c \in \theta$ and hence $\omega(\infty) \subseteq \theta$.
\end{proof}

\section{Hyperreals versus topological filters}

We equip the set $\mathbb{N}$ of non-negative integers with the discrete topology.
Let $x: \mathbb{N} \rightarrow \mathbb{R}$ be a real sequence defined by $x(n)=x_n$ for all $n \in \mathbb{N}$.
For every $U \in \widetilde{\mathbb{N}}$, we set $U(x)=\widetilde{x}(U)$ (see Theorem \ref{f-tilde}). That is,
\begin{center}
$U(x)=\{A \subseteq \mathbb{R} \mid A$ is an open subset of $\mathbb{R}$ with $\{n \in \mathbb{N} \mid x_n \in A\} \in U\} \in \widetilde{\mathbb{R}}.$
\end{center}

We next present an $s$-filter that is neither principal nor maximal.

\begin{example} \rm Consider the real sequence $x$ defined by $x_n=1/n$ for every $n \geq 1$. Let $U$ be a free ultrafilter on $\mathbb{N}$.
We have $\omega(0) \subseteq U(x)$ and $\omega(0) \neq U(x)$. Note that $U(x)$ is not maximal because $W=\cup_{n \geq 1} ]1/(n+1); 1/n[ \not\in U(x)$
and $(\overline{W})^c \not\in U(x)$.
\end{example}

Let $U$ be a free $s$-filter in $\widetilde{\mathbb{N}}$. Two real sequences $x$ and $y$ are said to be {\it $U$-equivalent} if $\{n\in \mathbb{N} \mid x_n=y_n\} \in U$. This relation is an equivalence relation on the set of real sequences. Let $\overline{x}$ denote the equivalence class of $x$. Note that the set
$\{\overline{x} \mid x \in \mathbb{R}^{\mathbb{N}}\}$ of equivalence classes coincides with the set of hyperreals (modulo $U$) (see \cite[p. 25]{Goldblatt}).
It is easy to check that $\overline{x}=\overline{y}$ implies $U(x)=U(y)$. To study the converse of this implication,
we need some notations and results. For any $a \in \mathbb{R}$, we consider the following sets:
\begin{center}
$\omega^{+}(a)=\{W$ open subset of $\mathbb{R}$ $\mid$ $]a; a+\varepsilon[ \subseteq W$ for some $\varepsilon > 0\}$,
\end{center}

\begin{center}
$\omega^{-}(a)=\{W$ open subset of $\mathbb{R}$ $\mid$ $]a-\varepsilon; a[ \subseteq W$ for some $\varepsilon > 0\}$,
\end{center}

\begin{center}
$\omega^{+}(\infty)=\{W$ open subset of $\mathbb{R}$ $\mid$ $]M; +\infty[ \subseteq W$ for some $M > 0\}$, and
\end{center}

\begin{center}
$\omega^{-}(\infty)=\{W$ open subset of $\mathbb{R}$ $\mid$ $]-\infty; -M[ \subseteq W$ for some $M > 0\}$.
\end{center}

\begin{proposition} \label{plus-minus} For each $\theta \in \widetilde{\mathbb{R}} \setminus \mathbb{R}$, there exists a unique
$a \in \mathbb{R} \cup \{\infty\}$ such that either $\omega^{+}(a) \subseteq \theta$ or $\omega^{-}(a) \subseteq \theta$.
\end{proposition}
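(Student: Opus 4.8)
The plan is to derive existence from Theorem \ref{converge-or-infinity} and to settle uniqueness separately by a disjointness argument. Since $\mathbb{R}$ is locally compact but not compact, that theorem tells us that any $s$-filter $\theta$ on $\mathbb{R}$ is either convergent to some finite point or free and containing $\omega(\infty)$. I would treat these two cases in turn and in each produce the required half-neighbourhood filter; the saturation axiom (iv) is the tool that decides the side.

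First I would handle the convergent case. Suppose $\theta$ converges to $a \in \mathbb{R}$, so $\omega(a) \subseteq \theta$. Because $\theta$ is not principal (as $\theta \in \widetilde{\mathbb{R}} \setminus \mathbb{R}$), there is an open $W \in \theta$ with $a \notin W$, whence the open set $\{a\}^c = ]{-\infty}, a[ \cup ]a, {+\infty}[$ lies in $\theta$ by upward closure. Applying axiom (iv) to this union, either $]a, {+\infty}[ \in \theta$ or $]{-\infty}, a[ \in \theta$. In the first subcase I claim $\omega^{+}(a) \subseteq \theta$: given $W \in \omega^{+}(a)$ with $]a, a+\varepsilon[ \subseteq W$, the intersection $]a-\varepsilon, a+\varepsilon[ \cap ]a, {+\infty}[ = ]a, a+\varepsilon[$ belongs to $\theta$ (both factors do), and then $W \in \theta$ by upward closure. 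The second subcase gives $\omega^{-}(a) \subseteq \theta$ symmetrically.

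Next I would treat the case where $\theta$ is free with $\omega(\infty) \subseteq \theta$. For each $M > 0$ the compact set $[-M, M]$ has complement $]{-\infty}, -M[ \cup ]M, {+\infty}[ \in \theta$, so axiom (iv) places one of the two half-lines in $\theta$; they are disjoint, so exactly one. The key point is that this choice cannot depend on $M$: if $]M_1, {+\infty}[ \in \theta$ while $]{-\infty}, -M_2[ \in \theta$ for some $M_1, M_2 > 0$, their empty intersection would lie in $\theta$, which is impossible. Hence the same side is selected for every $M$, say $]M, {+\infty}[ \in \theta$ for all $M$, and then $\omega^{+}(\infty) \subseteq \theta$ by upward closure, the opposite side yielding $\omega^{-}(\infty)$.

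Finally, uniqueness follows from the same disjointness principle. If two distinct choices held, the corresponding basic half-intervals among $]a, a+\varepsilon[$, $]a-\varepsilon, a[$, $]M, {+\infty}[$ and $]{-\infty}, -M[$ would, for suitably small $\varepsilon$ or large $M$, be disjoint while both lying in $\theta$, contradicting $\emptyset \notin \theta$; this simultaneously rules out two different finite centres, the two signs at a fixed centre, and a finite centre competing with $\infty$. The main obstacle I anticipate is not any single step but making the infinity case watertight, namely the $M$-independence of the chosen direction, since without it one would only obtain a direction that is a priori local in $M$ rather than a single $\omega^{\pm}(\infty)$.
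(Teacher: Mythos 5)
Your proof is correct and follows essentially the same route as the paper: invoke Theorem \ref{converge-or-infinity} to locate the limit $a \in \mathbb{R} \cup \{\infty\}$, then apply the saturation axiom (iv) to a two-piece decomposition of a punctured neighbourhood (resp.\ of the complement of a compact set) to select a side, and conclude by intersection and upward closure. Your only deviations are cosmetic: you split $\{a\}^c$ into the two full half-lines rather than the paper's $A \cap\, ]a-1; a+1[$ (which spares you the paper's $\varepsilon \geq 1$ versus $\varepsilon < 1$ case distinction), and you spell out the uniqueness/disjointness argument that the paper absorbs into its appeal to Theorem \ref{converge-or-infinity}.
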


\begin{proof} By Theorem \ref{converge-or-infinity}, there exists a unique $a \in \mathbb{R} \cup \{\infty\}$ such that $\omega(a) \subseteq \theta$.
Since $\theta$ is not principal, there exists $A \in \theta \setminus \omega(a)$.

\noindent {\bf Case 1:} $a$ is a real number. Since $a \not \in A$, we have
$$A \cap ]a-1; a+1[ = (A \cap ]a-1; a[) \cup (A \cap ]a; a+1[) \in \theta.$$
But $\theta$ is saturated, so $A \cap ]a-1; a[ \in \theta$ or  $A \cap ]a; a+1[ \in \theta$. Without loss of generality we can assume that
$A \cap ]a-1; a[ \in \theta$. This implies that $]a-1; a[ \in \theta$. To show that $\omega^{-}(a) \subseteq \theta$, take $W \in \omega^{-}(a)$.
Then $]a-\varepsilon; a[ \subseteq W$ for some $\varepsilon > 0$. If $\varepsilon \geq 1$, then $]a-1; a[ \subseteq ]a-\varepsilon; a[$
and hence $]a-\varepsilon; a[ \in \theta$. If $\varepsilon < 1$, then $]a-\varepsilon; a[ = ]a-1; a[ \cap ]a-\varepsilon; a+\varepsilon[ \in \theta$.
It follows that $W \in \theta$ and so $\omega^{-}(a) \subseteq \theta$.

\noindent {\bf Case 2:} $a=\infty$. The proof of this case runs as before by using the fact that
$$A \cap [-1; 1]^c = (A \cap ]-\infty; -1[) \cup (A \cap ]1; +\infty[) \in \theta.$$
\end{proof}

Choquet introduced the notion of absolute ulrafilters in 1968 \cite{Choquet-2}. A nontrivial ultrafilter $U$ on $\mathbb{N}$ is said to be {\it absolute}
if for every application $f: \mathbb{N} \rightarrow \mathbb{N}$ such that $\widetilde{f}(U)$ is not trivial, we have $\widetilde{f}(U)=U$.
Choquet showed that the Continuum Hypothesis guarantees the existence of such an ultrafilter and established the following result
(see \cite{Choquet-2}).

\begin{theorem}\label{a-ultrafilter-sequence} Let $U$ be a nontrivial ultrafilter. Then the following conditions are equivalent:
\begin{enumerate}
\item[{\em (i)}] $U$ is an absolute ulrafilter;
\item[{\em (ii)}] For every partition $(Q_n)_{n \geq 0}$ of $\mathbb{N}$ with $Q_n \not\in U$ for all $n \geq 0$, there exists $A \in U$ such that
$|Q_n \cap A| \leq 1$ for all $n \geq 0$.
\end{enumerate}
\end{theorem}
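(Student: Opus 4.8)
The plan is to recognize Theorem \ref{a-ultrafilter-sequence} as the classical characterization of \emph{selective} (Ramsey) ultrafilters, and to prove the two implications by translating freely between maps $f:\mathbb{N}\to\mathbb{N}$ and the partitions of $\mathbb{N}$ given by their fibres. Throughout I identify $\widetilde f(U)$ with the image ultrafilter $\{B\subseteq\mathbb{N}\mid f^{-1}(B)\in U\}$ of Theorem \ref{f-tilde}, and I record the basic dictionary on which everything rests: $\widetilde f(U)$ is trivial exactly when some fibre $f^{-1}(n)$ lies in $U$, and a selector for a partition $(Q_n)$ is precisely a set on which the associated index map is injective.

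First I would dispatch the easier implication (ii)$\Rightarrow$(i). Let $f:\mathbb{N}\to\mathbb{N}$ have $\widetilde f(U)$ nontrivial, so that $f^{-1}(n)\notin U$ for every $n$; since $\mathbb{N}\in U$ is a union of fibres, there are infinitely many nonempty ones, and after reindexing they form a partition of $\mathbb{N}$ into sets not in $U$. Applying (ii) produces $A\in U$ with $|A\cap f^{-1}(n)|\le 1$ for all $n$, i.e. $f$ is injective on $A$. The bijection $f|_A\colon A\to f(A)$ then transports $U$ onto $\widetilde f(U)$, giving $\widetilde f(U)=U$ in the sense of the definition, which is exactly absoluteness.

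The substantive direction is (i)$\Rightarrow$(ii). Given a partition $(Q_n)$ with every $Q_n\notin U$, I would define the index map $f$ by $f(m)=n$ for $m\in Q_n$. Then $f^{-1}(n)=Q_n\notin U$, so $\widetilde f(U)$ is nontrivial and absoluteness yields $\widetilde f(U)=U$. What remains is to extract from this an $A\in U$ on which $f$ is injective; such an $A$ is a selector, since $|A\cap Q_n|=|A\cap f^{-1}(n)|\le 1$.

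The main obstacle is precisely this last step: converting the equality $\widetilde f(U)=U$ into genuine injectivity of $f$ on a member of $U$. I would isolate it as a rigidity lemma: if $\widetilde g(U)=U$ for nonprincipal $U$, then $g$ is the identity on a set in $U$. The proof I have in mind is a three-colouring argument. Assuming $\{m\mid g(m)\neq m\}\in U$, consider the graph on $\mathbb{N}$ with edges $\{m,g(m)\}$; each component carries at most as many edges as vertices, hence at most one cycle, and is therefore properly $3$-colourable. This gives $\mathbb{N}=C_1\cup C_2\cup C_3$ with $g(m)\notin C_i$ whenever $m\in C_i$. Some $C_i\in U$, and then $\widetilde g(U)=U$ forces $g^{-1}(C_i)\in U$, while $g^{-1}(C_i)\cap C_i=\emptyset$ by the colouring property, a contradiction. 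Applied to the index map $f$ directly (or, if one reads the equality only up to isomorphism, to a composite $g\circ f$ where $g$ realizes $U\le_{RK}\widetilde f(U)$), this rigidity delivers the required injectivity and closes the argument. I expect the real work to lie in justifying the three-colouring of an infinite functional graph and in reconciling the literal equality $\widetilde f(U)=U$ with injectivity rather than mere isomorphism.
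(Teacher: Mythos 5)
First, a point about the comparison you were asked for: the paper does not prove Theorem \ref{a-ultrafilter-sequence} at all --- it is quoted from Choquet \cite{Choquet-2} --- so your attempt can only be measured against the classical argument and against the tools the paper itself imports. Your direction (i) $\Rightarrow$ (ii) is that classical argument and is essentially correct: the index map $f$ of the partition has every fibre $Q_n\notin U$, so $\widetilde f(U)$ is nontrivial, and your rigidity lemma converts ``$\widetilde f(U)=U$'' into ``$f$ is the identity on some $H\in U$'', whence $|H\cap Q_n|\le 1$. Note, however, that you do not need the three-colouring construction (whose infinite-component case needs more than your edge-versus-vertex count: an infinite connected graph can have $|E|=|V|$ and many cycles, so one must argue via merging of forward orbits). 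The rigidity lemma is an immediate consequence of the paper's Theorem \ref{A-cap-f(A)}: if $\widetilde g(U)=U$ and $A\in U$, then $g^{-1}(g(A))\supseteq A$ gives $g(A)\in\widetilde g(U)=U$, so $A\cap g(A)\in U$ is nonempty; hence the alternative $A\cap g(A)=\emptyset$ is impossible and $g$ must be the identity on a member of $U$. This is exactly how the paper derives its Corollary \ref{corol-Th2-Choquet}, so you could replace your main lemma by a two-line citation.

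The genuine gap is in (ii) $\Rightarrow$ (i), at the sentence ``the bijection $f|_A\colon A\to f(A)$ then transports $U$ onto $\widetilde f(U)$, giving $\widetilde f(U)=U$ in the sense of the definition.'' Injectivity of $f$ on a member of $U$ yields only that $\widetilde f(U)$ is Rudin--Keisler \emph{isomorphic} to $U$; it never yields equality, and with the paper's definition read literally the implication is false. Take $f(n)=n+1$: it is injective everywhere, and $\widetilde f(U)$ is nonprincipal whenever $U$ is, yet $\widetilde f(U)\neq U$ for \emph{every} ultrafilter $U$ (if the even numbers lie in $U$ then the odd numbers lie in $\widetilde f(U)$, and symmetrically, so equality would put both parities in $U$). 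Thus, under the literal reading, no nontrivial ultrafilter is absolute, and any $U$ satisfying (ii) --- these exist under CH --- refutes (ii) $\Rightarrow$ (i). What this really shows is that ``absolute'' must be read, as in Choquet and Louveau \cite{Louveau}, as ``$\widetilde f(U)$ is trivial or \emph{isomorphic} to $U$''; under that reading your proof, including your parenthetical about composing with a map $g$ realizing the isomorphism (which then has to be invoked in the (i) $\Rightarrow$ (ii) direction as well, not just mentioned), is the correct and standard one. You clearly sensed this tension in your closing sentence, but ``in the sense of the definition'' is not a proof step: as written, this half of your argument proves an isomorphism where the stated theorem demands an equality, and no argument can close that gap because the equality is false.
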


The next result which is taken from \cite{Choquet-1}.

\begin{theorem} \label{A-cap-f(A)} Let $U$ be an ultrafilter on a set $E$ and let $f:E \rightarrow E$ be an application.
Then there exists $A \in U$ such that exactly one of the following two conditions is satisfied:

{\em (i)} $f(x)=x$ for every $x \in A$, or

{\em (ii)} $A \cap f(A) = \emptyset$.
\end{theorem}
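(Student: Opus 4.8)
The plan is to split the argument according to whether the fixed-point set of $f$ lies in $U$. Write $F=\{x\in E\mid f(x)=x\}$. Since $U$ is an ultrafilter, exactly one of $F$ and $F^c=E\setminus F$ belongs to $U$. If $F\in U$, I would take $A=F$: then $f(x)=x$ for all $x\in A$, so condition (i) holds, while $f(A)=A$ gives $A\cap f(A)=A\neq\emptyset$ (recall $\emptyset\notin U$), so (ii) fails; hence exactly one of the two conditions is satisfied. The substance of the theorem therefore lies in the complementary case $F^c\in U$, in which $f$ is fixed-point-free on a member of $U$ and I must produce $A\in U$ with $A\subseteq F^c$ and $A\cap f(A)=\emptyset$; any such $A$ makes (ii) hold and, being a nonempty subset of $F^c$, makes (i) fail.

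To treat this case I would reduce to a graph-colouring statement. Consider the undirected loopless graph $G$ with vertex set $E$ and edge set $\{\,\{x,f(x)\}\mid x\in F^c\,\}$. The reduction is this: if $G$ admits a proper $3$-colouring $c\colon E\to\{1,2,3\}$, meaning $c(x)\neq c(f(x))$ whenever $f(x)\neq x$, then the theorem follows. Indeed, writing $C_i=c^{-1}(i)$ we have $E=C_1\cup C_2\cup C_3$, so $F^c=(C_1\cap F^c)\cup(C_2\cap F^c)\cup(C_3\cap F^c)$; since $F^c\in U$ and $U$ is an ultrafilter, one of the three pieces, say $A=C_i\cap F^c$, lies in $U$. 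For every $x\in A$ we have $f(x)\neq x$ and $c(f(x))\neq c(x)=i$, whence $f(x)\notin C_i$ and a fortiori $f(x)\notin A$; thus $A\cap f(A)=\emptyset$, which is (ii), and (i) fails since $A$ is a nonempty subset of $F^c$.

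It remains to show that $G$ is $3$-colourable, and this is the heart of the matter. Orienting each edge $\{x,f(x)\}$ from $x$ to $f(x)$ exhibits an orientation of $G$ in which every vertex has out-degree at most one, so $G$ is a pseudoforest: each connected component contains at most one cycle, and every cycle has length at least $2$ because $G$ has no loops. Every finite subgraph of $G$ is again a pseudoforest, hence $3$-colourable: delete one edge from the unique cycle in each component to obtain a forest, $2$-colour the forest with the colours $1,2$, and if a deleted edge $\{u,v\}$ then has $c(u)=c(v)$, recolour $u$ with the colour $3$ (which conflicts with none of its neighbours, all coloured from $\{1,2\}$). Having verified that all finite subgraphs are $3$-colourable, I would invoke the de Bruijn--Erd\H{o}s theorem to colour $G$ itself. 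I expect this infinite colouring step to be the main obstacle: it relies on a compactness (hence choice) principle, and one must take care that the passage to finite subgraphs is legitimate even though individual vertices of $G$ may have infinite in-degree. A self-contained alternative is a transfinite induction that well-orders $E$ and colours the components successively, but in either form some form of the axiom of choice is used.
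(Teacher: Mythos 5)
The paper offers no proof of this statement to compare yours against: it is imported verbatim from Choquet's seminar note \cite{Choquet-1}, with the one-line remark that it is ``taken from'' that reference. So your argument should be judged on its own, and it is correct and complete. The initial split on whether the fixed-point set $F$ lies in $U$ is sound, and you rightly observe that ``exactly one'' comes for free once either alternative holds for a nonempty $A$: under (i) one has $A\cap f(A)=A\neq\emptyset$, and under (ii) every $x\in A$ satisfies $f(x)\neq x$. The case $F^c\in U$ is exactly the classical three-set lemma (usually attributed to Kat\v{e}tov): a fixed-point-free map admits a partition of its domain into three sets, each disjoint from its own image. Your route to it --- orient the graph $\{x,f(x)\}$, note that out-degree at most one makes it a pseudoforest, $3$-colour finite subgraphs by deleting one cycle edge per component and repairing with the third colour, then pass to the whole graph by de Bruijn--Erd\H{o}s --- is the standard proof of that lemma, and the final pigeonhole step (one of the three sets $C_i\cap F^c$ must lie in the ultrafilter, and a colour class is disjoint from its image under $f$) is carried out correctly. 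Your closing worry is unfounded: the de Bruijn--Erd\H{o}s theorem has no degree hypotheses at all, so infinite in-degrees cause no difficulty; the only genuine caveat, which you already flag, is that some choice principle is used, and that is unavoidable in a statement about arbitrary ultrafilters on arbitrary sets. In short, you have supplied a self-contained proof of a result the paper merely cites, which is a useful addition rather than a deviation from the paper's (nonexistent) argument.
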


As a consequence of the preceding theorem, we obtain the following useful corollary.

\begin{corollary} \label{corol-Th2-Choquet} Let $U$ be an ultrafilter on a set $E$ and let $P \in U$. Assume that there exists an application
$f: P \rightarrow E$ such that
$$\forall S \in U, S \subseteq P \Rightarrow f(S) \in U.$$
Then there exists $A \in U$ such that $A \subseteq P$ and $f(x)=x$ for every $x \in A$.
\end{corollary}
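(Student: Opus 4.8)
The plan is to deduce the corollary from Theorem~\ref{A-cap-f(A)} after extending the partially defined map $f: P \rightarrow E$ to a self-map of the whole set $E$. First I would define $g: E \rightarrow E$ by $g(x) = f(x)$ for $x \in P$ and $g(x) = x$ for $x \in E \setminus P$. Since $g$ is now a genuine self-map of $E$, Theorem~\ref{A-cap-f(A)} supplies a set $A \in U$ for which exactly one of the following holds: either (i) $g(x) = x$ for every $x \in A$, or (ii) $A \cap g(A) = \emptyset$.

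The heart of the argument, and the step I expect to be the main obstacle, is to show that alternative (ii) is incompatible with the hypothesis and hence cannot occur. Assume we are in case (ii). I would set $S = A \cap P$; as $A, P \in U$, we have $S \in U$ and $S \subseteq P$, so the hypothesis on $f$ gives $f(S) \in U$. Because every point of $S$ lies in $P$, the extension $g$ agrees with $f$ on $S$, so $f(S) = g(S) \subseteq g(A)$ (using $S \subseteq A$). Upward closure of $U$ then forces $g(A) \in U$, and closure under finite intersection gives $A \cap g(A) \in U$, contradicting $A \cap g(A) = \emptyset$. This rules out case (ii).

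It remains to extract the conclusion from case (i). Put $A' = A \cap P$, which lies in $U$ and is contained in $P$. For every $x \in A'$ we have $x \in P$, hence $f(x) = g(x)$, and $x \in A$, hence $g(x) = x$; therefore $f(x) = x$. Thus $A'$ is the required set, and the proof is complete.

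The only subtlety is the choice of extension: extending $f$ by the identity off $P$ is harmless, since all the sets actually manipulated in the contradiction step live inside $P$ up to a member of $U$, so this choice does not interfere with deriving $g(A) \in U$ from $f(S) \in U$. Everything else reduces to routine use of the two defining properties of an ultrafilter, namely upward closure and closure under finite intersection.
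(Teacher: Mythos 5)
Your proposal is correct and follows essentially the same route as the paper: extend $f$ to $g:E\rightarrow E$ by the identity off $P$, apply Theorem~\ref{A-cap-f(A)}, and rule out the disjointness alternative by noting that the hypothesis forces $f(A\cap P)\in U$, hence $A\cap g(A)\supseteq (A\cap P)\cap f(A\cap P)\in U$ is nonempty. Your write-up simply makes explicit the step the paper compresses into ``using the fact that $A \cap f(A) \neq \emptyset$.''
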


\begin{proof} Let $g: E \rightarrow E$ be the extension of $f$ defined by $g(x)=x$ for every $x \in E \setminus P$.
From Theorem \ref{A-cap-f(A)}, it follows that there exists $Q \in U$ such that $Q \cap g(Q) = \emptyset$ or $g(x)=x$ for every $x \in Q$.
Hence the proof is done by setting $A=Q \cap P$ and using the fact that $A \cap f(A) \neq \emptyset$.
\end{proof}

In the remainder of this section we assume $U$ to be an absolute ultrafilter on $\mathbb{N}$.
We are ready to prove the main results of this paper. The first one which is maybe unexpected connects hyperreals to topological filters.

\begin{theorem} \label{converse} Let $U$ be an absolute ultrafilter on $\mathbb{N}$ and let $x$ and $y$ be two real sequences.
Then $U(x)=U(y)$ if and only if $\overline{x}=\overline{y}$.
\end{theorem}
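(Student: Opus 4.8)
The plan is to prove the two implications separately; the forward one is routine and the reverse one is the heart of the matter. The implication $\overline{x}=\overline{y}\Rightarrow U(x)=U(y)$ is immediate: if $E=\{n:x_n=y_n\}\in U$, then for every open $A$ the sets $\{n:x_n\in A\}$ and $\{n:y_n\in A\}$ differ only inside $E^c\notin U$, so one lies in $U$ iff the other does. For the converse I assume $U(x)=U(y)=:\theta$ and suppose, for contradiction, that $\overline{x}\neq\overline{y}$. First I would reduce to a bounded situation by replacing $x,y$ with $f\circ x,f\circ y$ for the homeomorphism $f(t)=t/(1+|t|)$: since $\widetilde{f\circ x}(U)=\widetilde{f}(\widetilde{x}(U))$ (Theorem \ref{f-tilde}), we still have $U(f\circ x)=U(f\circ y)$, and as $f$ is injective the conclusion $\overline{f\circ x}=\overline{f\circ y}$ is equivalent to $\overline{x}=\overline{y}$. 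Hence we may assume $\theta$ has a finite standard part.

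If $\theta$ is principal, say $\theta=\omega(a)$, then testing the open sets $]a,\infty[$ and $]-\infty,a[$ shows that neither $\{n:x_n>a\}$ nor $\{n:x_n<a\}$ is in $U$, forcing $\overline{x}=a$, and likewise $\overline{y}=a$, contrary to assumption. So $\theta\in\widetilde{\mathbb{R}}\setminus\mathbb{R}$, and Proposition \ref{plus-minus} gives a single $a$ and a single side, say $\omega^{+}(a)\subseteq\theta$, so $\overline{x}-a$ and $\overline{y}-a$ are positive infinitesimals. Translating, I take $a=0$; restricting to a set of $U$ I may assume $0<x_n$, $0<y_n$ and, since $\overline{x}\neq\overline{y}$, after a further restriction $x_n<y_n$ for all $n$ in some $P\in U$ (the other sign and the case $a=\infty$ being symmetric after the reduction above).

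The goal is now to manufacture an open $A$ with $A\in U(x)\setminus U(y)$. I set $g(n)$ equal to the least $m$ with $|y_m-x_n|<(y_n-x_n)/2$, and let $P_{\mathrm c}=\{n\in P:\text{such }m\text{ exists}\}$. If $P_{\mathrm c}\notin U$, then for the $U$-many $n\in P\setminus P_{\mathrm c}$ the interval $I_n=\,]x_n-(y_n-x_n)/2,\ x_n+(y_n-x_n)/2[$ contains $x_n$ but no $y_m$ whatsoever; hence $A=\bigcup I_n$ lies in $U(x)$ while $\{m:y_m\in A\}=\emptyset$, so $A\notin U(y)$, a contradiction. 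This is the benign case, where the two value-sequences can be separated by an open set.

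I expect the genuine obstacle to be the remaining case $P_{\mathrm c}\in U$, where each $x_n$ is approximated by $y$-values more closely than the gap $y_n-x_n$; there no open set separates the two value-sequences, and any purely topological construction fails. Here I would invoke absoluteness instead. The map $g\colon P_{\mathrm c}\to\mathbb{N}$ has no fixed point (a fixed point would give $y_n-x_n<(y_n-x_n)/2$), and no fibre $g^{-1}(m_0)$ lies in $U$: on such a fibre $|x_n-y_{m_0}|<(y_n-x_n)/2$, which together with $\{n:y_n<y_{m_0}\}\in U$ and $\{n:x_n<y_{m_0}/2\}\in U$ (both by infinitesimality, $y_{m_0}>0$ being fixed) is contradictory. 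Therefore $\widetilde{g}(U)$ is a nontrivial ultrafilter, so absoluteness forces $\widetilde{g}(U)=U$; consequently $g(S)\in U$ for every $S\in U$ with $S\subseteq P_{\mathrm c}$, and Corollary \ref{corol-Th2-Choquet} yields $A\in U$ with $g|_{A}=\mathrm{id}$, contradicting the absence of fixed points. The crux, then, is exactly to convert the \emph{approximate} matching encoded by $g$ into an \emph{exact} matching on a set of $U$, and this is precisely what absoluteness delivers through Theorem \ref{A-cap-f(A)} and Corollary \ref{corol-Th2-Choquet}.
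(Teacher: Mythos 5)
Your forward implication, your reduction to bounded sequences, your treatment of the principal case, and your sub-case $P_{\mathrm c}\notin U$ are all sound (the last is essentially the same disjoint-balls separation device the paper itself uses). The gap is in the decisive case $P_{\mathrm c}\in U$, at the step ``$\widetilde{g}(U)$ is a nontrivial ultrafilter, so absoluteness forces $\widetilde{g}(U)=U$.'' Read literally, the paper's definition of absoluteness does say this; but under that literal reading \emph{no} absolute ultrafilter exists, so the step cannot carry any content. Indeed, take $g$ to be the shift $n\mapsto n+1$: its fibres are singletons, so $\widetilde{g}(U)$ is nontrivial whenever $U$ is, yet $\widetilde{g}(U)=U$ is impossible for every ultrafilter $U$ (if the even numbers $E$ lie in $U$, then $g^{-1}(\text{odds})=E\in U$ puts the odds in $\widetilde{g}(U)=U$, so $U$ contains two disjoint sets; symmetrically if the odds lie in $U$). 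The notion Choquet actually introduced --- and the only form of absoluteness the paper's own proof uses, namely the partition property of Theorem \ref{a-ultrafilter-sequence} (selectivity) --- yields that $\widetilde{g}(U)$ is trivial or \emph{isomorphic} to $U$, i.e.\ equal to $U$ only after composing with a permutation of $\mathbb{N}$. With ``isomorphic'' in place of ``equal'' your chain breaks: $g(S)\in U$ for $U$-sets $S\subseteq P_{\mathrm c}$ no longer follows (for the shift one can even find $A\in U$ with $g(A)\cap A=\emptyset$, by Theorem \ref{A-cap-f(A)}), so Corollary \ref{corol-Th2-Choquet} cannot be invoked and no contradiction results.

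That this case cannot be closed by your argument is visible from a structural symptom: your Case $P_{\mathrm c}\in U$ never uses the hypothesis $U(x)=U(y)$. But the configuration of that case is perfectly realizable: take $y_n=1/n$ and $x_n=1/(n+1)$. Both are positive infinitesimals, $x_n<y_n$ for all $n$, and each $x_n$ equals $y_{n+1}$, so $P_{\mathrm c}$ is cofinite and $g$ is (essentially) the shift --- fixed-point free, with small fibres. Since this situation genuinely occurs, no correct argument can derive a contradiction from it alone; the hypothesis $U(x)=U(y)$ must enter (in this example it fails, which is how the theorem survives). This is exactly where the paper's proof goes differently: it first applies Theorem \ref{a-ultrafilter-sequence} to the partition by the intervals $I_k$ to obtain $A\in U$ on which each $I_k$ carries at most one $x_i$ and one $y_j$, surrounds each value by a ball meeting the set of values only at its centre, and matches indices by \emph{exact} equality, $\varphi(i)=j$ iff $x_i=y_j$ with $i,j\in A$. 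The hypothesis $U(x)=U(y)$, transported through these disjoint balls, is then what proves both that $P\in U$ and that $\varphi$ carries $U$-sets to $U$-sets; only after that does Corollary \ref{corol-Th2-Choquet} (valid for all ultrafilters) produce a set of $U$ on which $\varphi=\mathrm{id}$, i.e.\ $\overline{x}=\overline{y}$. In short, the conversion from approximate to exact matching cannot be purchased from absoluteness alone, as you attempt; it has to be extracted from $U(x)=U(y)$ itself, and the paper's ball construction is the mechanism that does this.
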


\begin{proof} The sufficiency is clear. Conversely, suppose that $U(x)=U(y)$.

\noindent {\bf Case 1:} Assume that $U(x) \in \mathbb{R}$. Then $U(x)=\omega(a)$ for some real number $a$. We claim that $\overline{x}=\overline{a}$.
Suppose the contrary and let $P=\{n \geq 0 \mid x_n \neq a\}$. It is clear that $P \in U$. Let $W$ be an open subset of $\mathbb{R}$ such that $x_n \in W$
for every $n \in P$ but $a \not\in W$. Then $W \in U(x) \setminus \omega(a)$, a contradiction. It follows that $\overline{x}=\overline{y}=\overline{a}$.

\noindent {\bf Case 2:} Now assume that $U(x) \not\in \mathbb{R}$. By Proposition \ref{plus-minus}, there exists a unique $a \in \mathbb{R} \cup \{\infty\}$ such that either $\omega^{+}(a) \subseteq U(x)$ or $\omega^{-}(a) \subseteq U(x)$.
For simplicity of notation, if $a \in \mathbb{R}$ set
\begin{center}
$I_k=]a+\frac{1}{k+1}; a+\frac{1}{k}[$ and $Q_k=\{n \geq 0 \mid x_n \in ]a+\frac{1}{k+1}; a+\frac{1}{k}[\}$
\end{center}
for any positive integer $k$,

\begin{center}
$Q=\{n \geq 0 \mid x_n \in ]a; a+1[\}$ and $Q_0 = Q^c$.
\end{center}
If $a=\infty$, set
\begin{center}
$I_k=]k; k+1[$ and $Q_k=\{n \geq 0 \mid x_n \in ]k; k+1]\}$
\end{center}
for any positive integer $k$,
\begin{center}
$Q=\{n \geq 0 \mid x_n \in ]1; +\infty[\}$ and $Q_0 = Q^c$.
\end{center}
Note that $Q \in U$ and $Q_n \not\in U$ for every $n \geq 0$. Moreover, $(Q_n)_{n\geq 0}$ is a partition  of $\mathbb{N}$ satisfying the conditions
of Theorem \ref{a-ultrafilter-sequence}. The sequence $y$ can be handled in much the same way and then there exists $A \in U$ such that
the following two conditions hold:

\begin{enumerate}
\item[(1)] $\forall k \in A, \exists \alpha(k) \geq 1, \exists \beta(k) \geq 1, x_k \in I_{\alpha(k)}$ and $y_k \in I_{\beta(k)}$.

\item[(2)] For every $n \geq 1$, $I_n$ contains at most one element of $x$ and at most one element of $y$.
\end{enumerate}

For ease of notation set $B(z, r)=]z-r; z+r[$ for any $z \in \mathbb{R}$ and any positive real $r$.
Let $P=\{i \in A \mid \exists j = \varphi(i) \in A, x_i=y_j\}$, $E=\{x_i \mid i \in A\}$, $F=\{y_j \mid j \in A\}$ and $G=E \cup F$.
Taking into account the distribution of the points of $G$ on the intervals $I_n$ $(n \geq 1)$, we see that the mapping $\varphi:P \rightarrow \mathbb{N}$ is well defined. Let $R=\varphi(P)$ and consider the following neighbourhoods of the points of $G$.

If $i \in P$ and $j=\varphi(i) \in R$, $\exists \varepsilon_i=\mu_j > 0$, $B(x_i, \varepsilon_i) \cap G=\{x_i\}$.

If $i \not\in P$, $\exists \varepsilon_i > 0$, $B(x_i, \varepsilon_i) \cap G=\{x_i\}$.

\noindent Similarly,

if $j \not\in R$, $\exists \mu_j > 0$, $B(y_j, \mu_j) \cap G=\{y_j\}$.

For $M \subseteq A$, set $V_M=\cup_{i \in M} B(x_i, \varepsilon_i)$ and $W_M=\cup_{j \in M}B(y_j, \mu_j)$.
Note that $$M \in U \Leftrightarrow V_M \in U(x) \Leftrightarrow W_M \in U(y).$$
Since $U(x)=U(y)$, we must have $P \in U$ and for every $S \subseteq P$, $S \in U$ implies $\varphi(S) \in U$. In fact,

\begin{center}
$P \not\in U \Rightarrow T=P^c \cap A \in U \Rightarrow V_T \in U(x) \setminus U(y)$ and $W_T \in U(y) \setminus U(x)$, and

$S \in U \Rightarrow V_S=W_{\varphi(S)} \in U(x)=U(y) \Rightarrow \varphi(S) \in U$.
\end{center}
Now using Corollary \ref{corol-Th2-Choquet}, it follows that there exists $H \in U$ such that $\varphi(i)=i$ for every $i \in H$.
This clearly forces $\overline{x}=\overline{y}$.
\end{proof}

Recall that the set of hyperreals (modulo $U$) is denoted by $^\ast\mathbb{R}$. Then the application $h:$ $^\ast\mathbb{R} \rightarrow \widetilde{\mathbb{R}}$ defined by $h(\overline{x})=U(x)$ is injective. Hence the set $^\ast\mathbb{R}$ can be identified to a subset of $\widetilde{\mathbb{R}}$ and then it can be endowed with the induced topology.

\begin{theorem} \label{separated} The set $^\ast\mathbb{R} \setminus \mathbb{R}$ endowed with the induced topology by the space
$\widetilde{\mathbb{R}}$ is a separated topological space.
\end{theorem}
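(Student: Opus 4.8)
The plan is to prove the Hausdorff property by separating any two distinct points with disjoint \emph{basic} open sets. Recall that the topology on $\widetilde{\mathbb{R}}$ has base $\{\widetilde{A} \mid A \in \tau\}$ and that, since every $s$-filter is closed under finite intersections and supersets, $\widetilde{A} \cap \widetilde{B} = \widetilde{A \cap B}$ for open $A, B$; in particular $\widetilde{A} \cap \widetilde{B} = \emptyset$ as soon as $A \cap B = \emptyset$. Thus, given $\overline{x} \neq \overline{y}$ in $^\ast\mathbb{R} \setminus \mathbb{R}$, it suffices to produce two \emph{disjoint} open subsets $A, B$ of $\mathbb{R}$ with $A \in U(x)$ and $B \in U(y)$: then $\widetilde{A}, \widetilde{B}$ are disjoint basic neighbourhoods of $U(x) = h(\overline{x})$ and $U(y) = h(\overline{y})$. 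By Theorem \ref{converse} we have $U(x) \neq U(y)$, and since $\overline{x}, \overline{y} \notin \mathbb{R}$ both filters are non-principal (by Case 1 of the proof of Theorem \ref{converse}); hence Proposition \ref{plus-minus} attaches to each of $U(x), U(y)$ a direction, namely a base point $a \in \mathbb{R} \cup \{\infty\}$ together with a side $+$ or $-$.

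First I would dispose of the case of different directions. If the directions of $U(x)$ and $U(y)$ differ, one reads off disjoint $A, B$ directly from Proposition \ref{plus-minus}: distinct real base points are separated by sufficiently short one-sided intervals $]a, a+\varepsilon[$ or $]a-\varepsilon, a[$; a common real base point with opposite sides by $]a, a+\varepsilon[$ and $]a-\varepsilon, a[$; and the cases involving $\infty$ by far tails $]M, +\infty[$ and $]-\infty, -M[$. In each situation the prescribed sets lie in $\omega^{\pm}(a) \subseteq U(x)$, respectively in $U(y)$, and are disjoint by construction.

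The substantial case is when $U(x)$ and $U(y)$ share the same direction, say $\omega^{+}(a) \subseteq U(x) \cap U(y)$ with $a \in \mathbb{R}$ (the remaining sides and $a = \infty$ being symmetric). Here I would re-enter the construction from the proof of Theorem \ref{converse}: using Theorem \ref{a-ultrafilter-sequence} for the absolute ultrafilter $U$, I obtain a set $A \in U$ and a decomposition of a right neighbourhood of $a$ into intervals $I_k$ such that, for indices in $A$, each $I_k$ carries at most one value of $x$ and at most one value of $y$ (after first shrinking the endpoints to avoid the countably many values of the two sequences, so that every relevant $x_n, y_n$ lands in exactly one $I_k$). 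With the points thus spread out, the only obstruction to enclosing the $x$-values and the $y$-values in disjoint balls is a \emph{coincidence of values} $x_i = y_j$. Exactly as in Theorem \ref{converse}, let $P = \{i \in A \mid x_i = y_j \text{ for some } j \in A\}$, let $\varphi(i) = j$ be the (unique) such index, and let $g : \mathbb{N} \to \mathbb{N}$ extend $\varphi$ by the identity off $P$.

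The hard part is precisely to kill these coincidences on a set of $U$, and this is where Choquet's Theorem \ref{A-cap-f(A)} enters. Applying it to $g$ yields $A^{*} \in U$ with either $g|_{A^{*}} = \mathrm{id}$ or $A^{*} \cap g(A^{*}) = \emptyset$. The first alternative would give $x_i = y_i$ for all $i \in A^{*} \cap P$; intersecting $A^{*}$ with $D = \{i \mid x_i \neq y_i\}$, which lies in $U$ because $\overline{x} \neq \overline{y}$, removes $P$ altogether, since $A^{*} \cap P \subseteq D^{c}$. The second alternative directly forbids any pair $i, j \in A^{*}$ from satisfying $g(i) = j$, hence $x_i = y_j$. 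Either way I extract $A_2 \in U$ with $A_2 \subseteq A$ on which no $x$-value equals any $y$-value. Finally, using the spreading property on $A_2$ — at most one $x$-point and one $y$-point per interval $I_k$, and these two points distinct — I choose around each $x_i$ and each $y_j$ $(i, j \in A_2)$ a ball small enough to stay inside its interval and to miss the at most one other point there, and set $A_{\mathrm{set}} = \bigcup_{i \in A_2} B(x_i, \varepsilon_i)$ and $B_{\mathrm{set}} = \bigcup_{j \in A_2} B(y_j, \delta_j)$. These are open and disjoint (balls in distinct intervals never meet, and balls in a common interval were separated by hand), and they satisfy $A_{\mathrm{set}} \in U(x)$ and $B_{\mathrm{set}} \in U(y)$ because $A_2 \in U$. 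Hence $\widetilde{A_{\mathrm{set}}} \cap \widetilde{B_{\mathrm{set}}} = \widetilde{A_{\mathrm{set}} \cap B_{\mathrm{set}}} = \emptyset$, and $^\ast\mathbb{R} \setminus \mathbb{R}$ is separated.
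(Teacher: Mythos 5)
Your proof is correct, and it shares the paper's overall skeleton: reduce separation to exhibiting disjoint open sets $V\in U(x)$, $W\in U(y)$ (via $\widetilde{V}\cap\widetilde{W}=\widetilde{V\cap W}$ and $\widetilde{\emptyset}=\emptyset$), dispose of distinct directions using Proposition \ref{plus-minus}, and in the same-direction case invoke absoluteness through Theorem \ref{a-ultrafilter-sequence} to spread both sequences over the intervals $I_k$, with the coincidence map $\varphi$ as the remaining obstruction. Where you genuinely diverge is the endgame. The paper splits on whether $P\in U$: if $P\notin U$ it separates $V_{A\cap P^c}$ from $W_A$; if $P\in U$ it applies Corollary \ref{corol-Th2-Choquet} in contrapositive form to obtain $T\in U$, $T\subseteq P$, with $\varphi(T)\notin U$, and then separates $V_T$ from $W_S$, where $S=R\cap(\varphi(T))^c$ --- a step that tacitly requires $R=\varphi(P)\in U$, which itself needs a further appeal to absoluteness that the paper does not spell out. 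You instead apply Theorem \ref{A-cap-f(A)} directly to the identity-extension $g$ of $\varphi$; \emph{both} alternatives of that theorem hand you a single set $A_2\in U$ on which no coincidence $x_i=y_j$ survives (the identity alternative being killed by intersecting with $D=\{i\mid x_i\neq y_i\}\in U$, the disjointness alternative forbidding $g(i)=j$ within $A_2$ outright), after which one uniform ball construction finishes the proof. This buys uniformity: no case split on $P$, no need for $\varphi(T)$, for $R\in U$, or for the identification $V_T=W_{\varphi(T)}$. The paper's route, in exchange, reuses the already-established Corollary rather than going back to Choquet's theorem. You are also more careful than the paper on two points it leaves implicit: choosing the interval endpoints to avoid the countably many values of the two sequences, so that the sets $Q_k$ really do partition $Q$, and requiring each ball to lie inside its interval, which is what actually makes the final unions disjoint rather than merely avoiding each other's centers.
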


\begin{proof} Let $\overline{x}, \overline{y} \in$ $^\ast\mathbb{R} \setminus \mathbb{R}$ with $\overline{x} \neq \overline{y}$.
Then by Proposition \ref{plus-minus}, there exist $a, b \in \mathbb{R} \cup \{\infty\}$ and $\alpha, \beta \in \{+, -\}$ such that $\omega^{\alpha}(a) \subseteq U(x)$ and $\omega^{\beta}(b) \subseteq U(y)$. The proof falls naturally into three parts:
(i) $a\neq b$, (ii) $a=b$ and $\alpha \neq \beta$ and (iii) $a=b$ and $\alpha = \beta$.

For (i) and (ii), it is clear that there exist two disjoint open sets $V$ and $W$ such that $V \in U(x)$ and $W \in U(y)$.
Now assume that $a=b$ and $\alpha = \beta = +$. We will use the same notation of the proof of Theorem \ref{converse}.
Note that $U$ is an absolute ultrafilter.
Applying Theorem \ref{a-ultrafilter-sequence}, we conclude that there exists $A \in U$ satisfying the following two conditions:
\begin{enumerate}
\item[(1)] every interval $I_n$ contains at most an element $x_i$ and at most an element $y_j$ with $i, j \in A$, and
\item[(2)] for each $k, l \in A$, $\exists m, n \geq 0$, $x_k \in I_n$ and $y_l \in I_m$.
\end{enumerate}
\noindent Let $P=\{i \in A, \exists j=\varphi(i) \in A, x_i=y_j\}$ and set $R=\varphi(P)=\{j\in A, \exists i \in A, x_i=y_j\}$. It is easily seen that
$\varphi: P \rightarrow R$ is a bijection.

\noindent {\bf Case 1:} Assume that $P, Q \in U$. Since $\overline{x} \neq \overline{y}$, it follows from Corollary \ref{corol-Th2-Choquet}
that there exists $T \in U$ with $T \subseteq P$ and $\varphi(T) \not\in U$. This implies that $S=R \cap (\varphi(T))^c \in U$.
In this case $V_T$ and $W_S$ are two disjoint open sets such that $V_T \in U(x)$ and $W_S \in U(y)$.

\noindent {\bf Case 2:} Assume now that $P \not\in U$. Then $T=A \cap P^c \in U$. Therefore $V_T$ and $W_A$ are two disjoint open sets such that
$V_T \in U(x)$ and $W_A \in U(y)$. This completes the proof.
\end{proof}

\noindent {\bf Acknowledgement.}

The author would like to express his gratitude to Professor Labib Haddad of Clermont-Ferrand University for his kind help and many stimulating discussions
during the preparation of this paper. His feedback had led to substantial improvements on the quality of this work.
The author would also like to thank him for providing several interesting papers related to the study.
The author also wish to thank Professor Rachid Tribak for translating an earlier version of this paper from French into English, for preparing
this manuscript in LaTeX and for improving the presentation of the paper.

\end{document}